\documentclass[11pt]{amsart}
\usepackage{amsfonts,amssymb,amscd,amsmath,enumerate,verbatim,calc,graphicx}
\def\NZQ{\mathbb}               % the font for N,Z,Q,R,C

\def\ZZ{{\NZQ Z}}
\def\RR{{\NZQ R}}
\def\NN{{\NZQ N}}
\def\PP{{\NZQ P}}
\def\kk{\Bbbk}%

\def\mm{\mathfrak{m}} %maximal idea
\def\sat{{\rm sat}}
\def\Proj{\operatorname{Proj}}
\def\Tor{\operatorname{Tor}}
\def\codim{\operatorname{codim}}
\def\reg{\operatorname{reg}}

\def\cL{{\mathcal L}}

\def\opn#1#2{\def#1{\operatorname{#2}}} % to make operators
\opn\con{conv} \opn\inte{int} \opn\height{ht} \opn\vol{vol} \opn\gp{gp}
%------------------------------------------------
%
\newtheorem{theorem}{Theorem}[section]
\newtheorem{prop}[theorem]{Proposition}

\newtheorem{lem}[theorem]{Lemma}
\theoremstyle{remark}
\newtheorem{rem}[theorem]{Remark}
\newtheorem{dfn}[theorem]{Definition}
\numberwithin{equation}{section}

\begin{document}

\title[Non-level semi-standard graded CM domain]{Non-level semi-standard graded Cohen--Macaulay domain with $h$-vector $(h_0,h_1,h_2)$}
\author{Akihiro Higashitani}
\address{Department of Mathematics, Kyoto Sangyo University, 
Motoyama, Kamigamo, Kita-Ku, Kyoto, Japan, 603-8555}
\email{ahigashi@cc.kyoto-su.ac.jp}
\author{Kohji Yanagawa}
\address{Department of Mathematics, Kansai University, Suita, Osaka 564-8680, Japan}
\email{yanagawa@kansai-u.ac.jp}
\thanks{%This work was supported by JSPS KAKENHI Grant Numbers 26800015,16K05114.
The authors are partially supported by JSPS Grant-in-Aid for Young Scientists (B) 26800015, and Grant-in-Aid for Scientific Research (C) 16K05114, respectively.}
\begin{abstract}
Let $\kk$ be an algebraically closed field of characteristic 0, and $A=\bigoplus_{i \in \NN} A_i$ a Cohen--Macaulay graded domain with $A_0=\kk$. If $A$ is semi-standard graded (i.e., $A$ is finitely generated as  a $\kk[A_1]$-module), it has the {\it $h$-vector} $(h_0, h_1, \ldots, h_s)$, which encodes the Hilbert function of $A$.  
From now on, assume that $s=2$.  It is known that if  $A$ is standard graded (i.e., $A=\kk[A_1]$), then $A$ is level. We will show that, in the semi-standard case, if $A$ is not level, then $h_1+1$ divides $h_2$. Conversely, for any positive integers $h$ and $n$, there is a non-level $A$ with the $h$-vector $(1, h, (h+1)n)$. Moreover, such examples can be constructed as Ehrhart rings  (equivalently, normal toric rings). 
\end{abstract}

\maketitle

\section{Introduction}
Let $\kk$ be  a field, and $A=\bigoplus_{i \in \NN} A_i$ a graded noetherian commutative ring with $A_0=\kk$.  
If $A =\kk[A_1]$, that is, $A$ is generated by $A_1$ as a $\kk$-algebra,  we say $A$ is {\it standard graded}.  
If $A$ is finitely generated as a $\kk[A_1]$-module, we say $A$ is {\it semi-standard graded}.  
The {\it Ehrhart rings}  of lattice polytopes (see \S4 below) and the face rings of simplicial posets (see \cite{St2}) are typical examples of  semi-standard graded rings. %Moreover,  Ehrhart rings are nothing other than normal f semigroup rings which are semi-standard graded.  See \S ?? below.  
In this sense, the notion of semi-standard graded rings is natural in combinatorial commutative algebra.   

If $A$ is a semi-standard graded ring of dimension $d$, its Hilbert series is of the form  
$$\sum_{i \in \NN} (\dim_\kk A_i) t^i =\frac{h_0+h_1 t + \cdots +h_st^s}{(1-t)^d}$$ 
for some integers $h_0, h_1, \ldots, h_s$ with $\sum_{i=1}^s h_i \ne 0$ and $h_s \ne 0$.  
We call the vector $(h_0,h_1,\ldots,h_s)$ the {\it $h$-vector} of $A$. 
We always have $h_0=1$ and $\deg A=\sum_{i=0}^s h_i$. 

If a semi-standard graded ring $A$ is Cohen--Macaulay,  its $h$-vector satisfies 
$h_i \ge 0$ for all $i$. If further $A$ is  standard graded, we have $h_i > 0$ for all $i$. 
For further information on the $h$-vectors of Cohen--Macaulay semi-standard (resp. standard) graded {\it domains}, see \cite{St} (resp. \cite{Y}).  

If a semi-standard graded ring $A$ is Cohen--Macaulay and of dimension $d$, it admits the (graded) canonical module $\omega_A$, 
which is a $d$-dimensional Cohen--Macaulay $A$-module with the Hilbert series 
$$\sum_{i \in \ZZ} \dim_\kk (\omega_A)_i t^i =\frac{t^{d-s}(h_s+h_{s-1} t + \cdots +h_0t^s)}{(1-t)^d},$$ 
where  $(h_0, h_1, \ldots, h_s)$ is the $h$-vector of $A$  (see \cite[\S 3.6 and  Theorem~4.4.6]{BH}). 

\begin{dfn}
In the above situation, if $\omega_A$ is generated by $(\omega_A)_{d-s}$ as an $A$-module (equivalently, $\omega_A$ is generated by elements all of the same degree),  we say $A$ is {\it level}. 
\end{dfn}

Clearly, the notion of level rings generalizes that of Gorenstein rings.  It is easy to see that a Cohen--Macaulay semi-standard graded ring with the $h$-vector $(h_0,h_1)$ or $(h_0, 0, h_2)$ is always level.
The following fact, whose assumption that $A$ is a domain is really necessary, is a special case of \cite[Theorem~3.5]{Y}. 
This essentially follows from the theory of algebraic curves, and might be an old  result.

\begin{prop}[{\cite[Corollary~3.11]{Y}}]
Let $\kk$ be an algebraically closed field of characteristic 0, and $A$ a Cohen--Macaulay standard graded $\kk$-algebra.  If the $h$-vector of $A$ is of the form  $(h_0, h_1, h_2)$ and $A$ is an integral domain, then $A$ is level. 
\end{prop}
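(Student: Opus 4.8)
The plan is to descend to the Artinian case, reinterpret that ring via a generic linear section of $\Proj A$, and then win with a short eigenvalue argument; the role of algebraic curves enters through the General Position Theorem. Set $d=\dim A$ and assume $s=2$ (the cases $s\le 1$ being already disposed of). Choose a general linear system of parameters $\ell_1,\dots,\ell_d$. Since $A$ is Cohen--Macaulay this is a regular sequence, the Artinian reduction $\bar A:=A/(\ell_1,\dots,\ell_d)$ has Hilbert function $(1,h_1,h_2,0,0,\dots)$, and --- because $\omega_A$ is a maximal Cohen--Macaulay module, so the $\ell_i$ also form a regular sequence on $\omega_A$ --- one gets $\omega_{\bar A}\cong\omega_A/(\ell_1,\dots,\ell_d)\omega_A$ up to a shift of degree. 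By graded Nakayama, $\omega_A$ is generated in a single degree iff $\omega_{\bar A}$ is; hence $A$ is level iff $\bar A$ is, i.e.\ iff the socle of $\bar A$ sits entirely in its top degree $2$. As $\bar A_2$ always lies in the socle and $\bar A_0$ never does (here $h_1,h_2\ge 1$ because $A$ is standard graded with $s=2$), it suffices to prove: \emph{no nonzero $x\in\bar A_1$ satisfies $x\bar A_1=0$.}

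Next, realize $\bar A$ geometrically. Put $X=\Proj A\subset\PP^{d+h_1-1}$; since $A$ is a standard graded domain, $X$ is an integral, nondegenerate variety of dimension $d-1$ and degree $e:=1+h_1+h_2$. A general linear section of codimension $d-1$ meets $X$ in a set $\Gamma=\{p_1,\dots,p_e\}$ of $e$ points in some $\PP^{h_1}$; writing $R_\Gamma=A/(\ell_1,\dots,\ell_{d-1})$ (a one-dimensional Cohen--Macaulay, hence saturated, ring, so the homogeneous coordinate ring of $\Gamma$) and $\ell$ for the image of $\ell_d$, we have $\bar A=R_\Gamma/(\ell)$. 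The key classical input is: because $\kk$ is algebraically closed of characteristic $0$ and $X$ is integral and nondegenerate, the General Position Theorem of Castelnuovo (applied to the curve obtained from $X$ by Bertini cuts) shows that $\Gamma$ is in \emph{linearly general position} --- any $h_1+1$ of its points span $\PP^{h_1}$. Nondegeneracy gives $(R_\Gamma)_1=H^0(\PP^{h_1},\mathcal O(1))$, and $\dim\bar A_2=h_2$ forces $\dim(R_\Gamma)_2=e$, so evaluation at $\Gamma$ identifies $(R_\Gamma)_2\cong\kk^e$ with the space of functions on $\Gamma$ (equivalently, $\Gamma$ imposes independent conditions on quadrics).

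Finally, suppose $0\ne x\in\bar A_1$ with $x\bar A_1=0$; lift it to a linear form $\tilde x\notin\kk\ell$, and choose $\ell$ nonzero at every $p_i$. Unwinding $x\bar A_1=0$ inside $\bar A_2=(R_\Gamma)_2/\ell\,(R_\Gamma)_1$ and using $(R_\Gamma)_2\cong\kk^e$, one finds: for every linear form $m$ the vector $(\lambda_i\,m(p_i))_i$ lies in $V:=\{(m(p_i))_i:m\in(R_\Gamma)_1\}\subset\kk^e$, where $\lambda_i=\tilde x(p_i)/\ell(p_i)$. In other words the diagonal operator $D=\operatorname{diag}(\lambda_1,\dots,\lambda_e)$ preserves the $(h_1+1)$-dimensional subspace $V$. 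Linear general position says every coordinate projection $V\to\kk^{h_1+1}$ is an isomorphism; consequently an eigenvalue $\mu$ of $D$ whose eigenspace $E_\mu$ meets $V$ must occur with multiplicity $m_\mu\ge e-h_1=h_2+1$, and then $\dim(V\cap E_\mu)=m_\mu-h_2$. Since $D$ is diagonalizable and $V$ is $D$-invariant these dimensions sum to $h_1+1$, so if $k$ eigenvalues contribute then $\sum m_\mu=(h_1+1)+kh_2\le e=(h_1+1)+h_2$, forcing $k=1$ and $m_{\mu_0}=e$: all $\lambda_i$ are equal. Then $\tilde x-\mu_0\ell$ vanishes on the spanning set $\Gamma$, hence $\tilde x\in\kk\ell$, contradicting $\tilde x\notin\kk\ell$. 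This shows $\bar A$, and therefore $A$, is level. The one genuinely substantial ingredient is the General Position Theorem --- precisely where integrality of $A$ and $\operatorname{char}\kk=0$ are indispensable --- while the rest is the bookkeeping of the Artinian reduction and the elementary count above.
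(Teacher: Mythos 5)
Your argument is correct, but it reaches the conclusion by a genuinely different route than the one the paper relies on. The paper cites \cite[Corollary~3.11]{Y}, and its own machinery (the proof of Proposition~\ref{deg B > c+1} together with Lemmas~\ref{level by Betti} and \ref{uniform level}) runs as follows: Bertini reduction to a curve, Harris's \emph{Uniform} Position Theorem for the general hyperplane section, and then the Betti-number criterion for levelness, with the key vanishing $\beta_{c,c+1}=0$ for points in uniform position proved by induction on the number of points (removing one point at a time, the base case $\#X=n+2$ being Gorenstein via Cayley--Bacharach/Kreuzer). You instead pass to the Artinian reduction, translate levelness into the absence of degree-one socle, and kill a putative socle element by a diagonal-operator/invariant-subspace count on the evaluation space $V\subset\kk^e$; for this you only need the (weaker) General Position Theorem, i.e.\ linearly general position, rather than uniform position, and you avoid Betti numbers and the point-removal induction entirely. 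What each approach buys: yours isolates exactly where integrality and $\operatorname{char}\kk=0$ enter and is more elementary at the key step; the paper's Betti-number formulation is what allows the extension to the semi-standard case via the sequence $0\to B'\to A'\to C'\to 0$, which your socle argument as written does not address (nor does the statement require it). Two places are compressed but repairable with the genericity you already invoke: (i) the identification of $R_\Gamma=A/(\ell_1,\dots,\ell_{d-1})$ with the homogeneous coordinate ring of the reduced point set $\Gamma$ needs not just saturatedness (which CM gives) but reducedness of the general linear section, a standard char-$0$ Bertini fact; (ii) the equality $\dim(V\cap E_\mu)=m_\mu-h_2$ needs one line: project onto a set of $h_1+1$ coordinates containing all indices with $\lambda_i\ne\mu$ (possible since $e-m_\mu\le h_1$), use that this projection is injective on $V$ by linearly general position to get the upper bound, and the usual dimension count for the lower bound; with that, your count $\sum m_\mu=(h_1+1)+kh_2\le e$ forces $k=1$, all $\lambda_i$ equal, and the contradiction $\tilde x\in\kk\ell$ as you state.
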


In this paper, we weaken the assumption on $A$ in the above result to be semi-standard graded.  The following is the first main result.

\begin{theorem}[see Theorem~\ref{main1}]\label{main0} Let $\kk$ be an algebraically closed field of characteristic 0, and $A$ a Cohen--Macaulay semi-standard graded domain with the $h$-vector $(h_0,h_1,h_2)$.  If $A$ is not level, then ($h_1 \ne 0$ and)  $h_1+1$ divides $h_2$.  
\end{theorem}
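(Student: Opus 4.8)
The plan is to reduce to the standard-graded case by passing to the standard-graded subalgebra $B = \kk[A_1] \subseteq A$ and then analyzing $A$ as a module over $B$. First I would set $d = \dim A$; since $A$ is semi-standard graded, $B$ is standard graded of the same dimension $d$, and $A$ is a finitely generated $B$-module. Choosing a homogeneous system of parameters $\theta_1,\dots,\theta_d \in A_1$ (possible since $\kk$ is infinite and $A$ is CM), we may factor out by the regular sequence $\theta_1,\dots,\theta_d$, so without loss of generality $d=0$: now $A$ is a finite-dimensional graded Artinian Gorenstein-or-not $\kk$-algebra with Hilbert function $(1, h_1, h_2)$ concentrated in degrees $0,1,2$, and $B=\kk[A_1]$ is its Artinian reduction's standard part. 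The canonical module $\omega_A$ in the Artinian case is just the $\kk$-dual $A^\vee$ with reversed grading, so levelness of $A$ is equivalent to $A^\vee$ being generated in a single degree, which by duality is equivalent to the socle of $A$ being concentrated in a single degree. Since $\reg$ and the $h$-vector are unchanged, and since the socle in degree $2$ is automatically nonzero ($h_2 \neq 0$ as $s=2$), non-levelness means precisely that $A$ has socle in degree $1$, i.e. there exists $0 \neq v \in A_1$ with $A_1 \cdot v = 0$ in $A_2$.

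Next I would bring in the domain hypothesis through Proposition~1.4 (the standard-graded case). The point is that $B = \kk[A_1]$, being a standard-graded quotient of the original domain... wait, $B$ is a subring, hence itself a domain; but after Artinian reduction $B$ is no longer a domain, so I must apply Proposition~1.4 to the geometric situation \emph{before} reduction. Concretely: let $X = \Proj B \subseteq \PP(A_1^\vee)$ be the image of $\Proj A$ under the morphism given by $A_1$. Since $A$ is a domain and finite over $B$, $B$ is a domain and $X$ is an integral projective variety, a curve after we cut down to dimension $1$. The key structural input is that $A$, as a graded $B$-module, decomposes in low degrees and controls how much "bigger" $A$ is than $B$ in degree $2$: precisely, $h_2^A - h_2^B$ measures the failure of $A_2 = B_2$, and this failure is exactly the module-theoretic obstruction to $A=B$. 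I would then show that $A$ is non-level iff $A_1\cdot A_1 \subsetneq A_2$ after Artinian reduction is "as small as possible compatible with being a module," and translate this into a statement about the Hilbert function of $\omega_X$ or of $A/B$.

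The cleanest route, I expect, is to use the canonical module directly: write the Hilbert series of $\omega_A$ as $\frac{t^{d-2}(h_2 + h_1 t + t^2)}{(1-t)^d}$, so $(\omega_A)_{d-2}$ has dimension $h_2$, $(\omega_A)_{d-1}$ has dimension $h_1$, $(\omega_A)_{d}$ has dimension $1$. Levelness fails iff $A\cdot(\omega_A)_{d-2} \subsetneq (\omega_A)_{d-1}$, i.e. iff the natural map $A_1 \otimes (\omega_A)_{d-2} \to (\omega_A)_{d-1}$ is not surjective. I would analyze this map via the $B$-module structure and multiplication by a general linear form $\ell \in A_1$: multiplication $\ell: (\omega_A)_{d-2} \to (\omega_A)_{d-1}$ is injective on the quotient by $(0:\ell)$, and by duality corresponds to multiplication $\ell: A_1 \to A_2$ in the Artinian reduction. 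Using that $X$ is an integral curve of degree $h_1 + h_2$ (wait — degree $= \sum h_i^A$) and arithmetic genus computed from $h_2$, together with Proposition~1.4 applied to a general hyperplane section which forces the \emph{standard} subring to be level, I would derive that the "new" part of $\omega_A$ in degree $d-2$ (the part not coming from $\omega_B$) must be a free module over a polynomial subring of $A$, forcing its dimension $h_2$ to be a multiple of the rank $h_1+1$ of the relevant graded piece. The precise bookkeeping showing the multiplicity of the degree-$(d-1)$ part over the degree-$(d-2)$ part equals $h_1+1$ is where I expect the main obstacle to lie: one must rule out intermediate cases using the integrality of $A$ and the classification of curves of small genus, presumably via a Riemann–Roch or Clifford-type inequality on the curve $X$, which is exactly the mechanism behind Proposition~1.4.

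The hardest step will be establishing the divisibility itself, as opposed to merely the inequality $h_2 \geq h_1 + 1$: the inequality follows from $\omega_A$ not being generated in degree $d-2$ alone (so degree $d-1$ contributes genuinely new generators, hence $h_1 \neq 0$, and a dimension count gives $h_2 \geq h_1+1$), but upgrading this to $(h_1+1) \mid h_2$ requires identifying a free-module structure — most likely realizing the degree-$(d-2)$ component of $\omega_A$, or equivalently $A_{\geq 2}$ in the Artinian reduction, as a free module over $\kk[\ell]$ for a general $\ell$ of rank $h_2/(h_1+1)$, whose freeness is forced by $A$ being a domain together with the curve-theoretic input.
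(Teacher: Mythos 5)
Your proposal contains a genuine gap at exactly the point you flag as ``the hardest step'': nothing in it actually produces the divisibility $(h_1+1)\mid h_2$, and the mechanisms you suggest for it do not work. First, the opening move of killing a full system of parameters and working with the Artinian reduction destroys the only place the domain hypothesis can act; you notice this, but the proposal never cleanly recovers (and the claim that $X=\Proj B$ has degree $h_1+h_2$ is false --- $\deg B$ is not determined by the $h$-vector of $A$, and in the non-level case it turns out to be exactly $h_1+1$). Second, the suggested free-module structure cannot exist as stated: in the Artinian reduction $A_{\ge 2}=A_2$ is annihilated by $\mm$, hence by any $\ell\in A_1$, so it is never a nonzero free $\kk[\ell]$-module; and the appeal to Riemann--Roch/Clifford is left entirely unspecified.

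The paper's proof supplies the two ideas you are missing. (1) It reduces by Bertini to $\dim A=2$ (not $0$), so that $B=\kk[A_1]$ stays the coordinate ring of an integral curve, and proves (Proposition~\ref{deg B > c+1}) that if $\deg B> h_1+1$ then $A$ \emph{is} level: a general hyperplane section of $\Proj B$ is a set of points in uniform position (Harris's Uniform Position Theorem), which forces $\beta_{c,c+1}(B')=0$ (Lemma~\ref{uniform level}), and since $A'/B'$ vanishes in degrees $\le 1$ this vanishing transfers to $A'$, which by Lemma~\ref{level by Betti} means levelness. Hence non-levelness forces $\deg B=h_1+1$, i.e.\ $B$ is the coordinate ring of a rational normal curve, $B\cong\kk[x^n,x^{n-1}y,\ldots,y^n]\cong T^{(n)}$ with $n=h_1+1$. (2) One then uses the classification of indecomposable graded maximal Cohen--Macaulay modules over this Veronese ring: they are the $V(m)=\bigoplus_i T_{m+ni}$, $0\le m<n$, and among these only $V(n-1)$ has a $2$-linear resolution. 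A Hilbert-function argument gives $A\cong B\oplus C$ with $C$ generated in degree $2$ and $\reg C=2$, so $C\cong V(n-1)^{\oplus l}$ and $h_2=nl=(h_1+1)l$. It is this classification of MCM modules over the Veronese subring --- not a freeness statement over $\kk[\ell]$ --- that yields the divisibility, and it is absent from your plan.
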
 
 
The outline of the proof is the following. By Bertini's theorem, we may assume that $\dim A=2$. Under the assumption of the theorem, if $A$ is not level, then the subring $B:=\kk[A_1]$ is isomorphic to the Veronese subring  $\kk[x^n, x^{n-1}y, \ldots, xy^{n-1}, y^n]$ of $\kk[x,y]$. Next we regard $A$ as a $B$-module. 
Then it is a maximal Cohen--Macaulay module, and we consider its direct sum decomposition.   
However, the classification of  indecomposable  maximal Cohen--Macaulay modules over $\kk[x^n, x^{n-1}y, \ldots, xy^{n-1}, y^n] \ (\cong B)$ is well-known, and we can determine the $B$-module structure of $A$.  

The next result states that the ``converse" of the above theorem holds. 

\begin{theorem}[see Theorem~\ref{thm:Ehr}] 
For any positive integers $h$ and $n$, there is a Cohen--Macaulay semi-standard graded domain which is not level and has the $h$-vector $(1, h, (h+1)n)$. Moreover, these rings can be constructed as Ehrhart rings  (equivalently, as normal affine semigroup rings).  
\end{theorem}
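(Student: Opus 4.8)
The plan is to produce the required ring explicitly as an Ehrhart ring. Put $m=h+1$; since $h$ is a positive integer, $m\ge 2$. In $\RR^3$ consider the lattice simplex
$$P=P_{m,n}:=\con\{\,0,\ m e_1,\ e_2,\ e_1+e_2+(n+1)e_3\,\},$$
and let $A$ be its Ehrhart ring. Then $A$ is a $4$-dimensional normal affine semigroup ring, hence a Cohen--Macaulay domain (Hochster), and since the Ehrhart ring of a lattice polytope is module-finite over the subalgebra generated by its degree-one part (being its normalization), $A$ is semi-standard graded. Thus there are two things to verify: the $h$-vector of $A$ (equivalently the $\delta$-vector of $P$) is $(1,h,(h+1)n)$, and $A$ is not level.

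For the $h$-vector I would first determine $P\cap\ZZ^3$. Expanding a point of $P$ in barycentric coordinates, a lattice point $(x,y,z)\in P$ must satisfy $z/(n+1)\in\{0,1,\dots,n+1\}$ and $\tfrac{x}{m}+y\le 1+\tfrac{z}{m(n+1)}$; a brief case distinction on $z$ then shows that the only lattice points of $P$ are the $m+1$ points of the edge $[0,m e_1]$, the vertex $e_2$, and the vertex $v:=e_1+e_2+(n+1)e_3$. Hence $\#(P\cap\ZZ^3)=m+3$ and $\inte(P)\cap\ZZ^3=\varnothing$. The normalized volume of $P$ is $|\det(m e_1,e_2,v)|=m(n+1)$. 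Since $\delta_0=1$, $\delta_1=\#(P\cap\ZZ^3)-4=m-1$, $\delta_3=\#(\inte(P)\cap\ZZ^3)=0$, and $\delta_0+\delta_1+\delta_2+\delta_3$ equals the normalized volume, the $\delta$-vector of $P$ is $(1,m-1,mn,0)$; so the $h$-vector of $A$ is $(1,h,(h+1)n)$, with $s=2$ because $mn\neq 0$.

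For non-levelness, the four facet inequalities of $P$ give
$$kP=\{(x,y,z):z\ge 0,\ z\le (n+1)x,\ z\le (n+1)y,\ (n+1)x+m(n+1)y-z\le km(n+1)\}$$
for every $k\ge 1$. The canonical module $\omega_A$ is the ideal spanned by $\bigcup_{k\ge1}\bigl(\inte(kP)\cap\ZZ^3\bigr)$ (Danilov--Stanley), and its least non-zero degree, the codegree of $P$, equals $\dim A-s=2$; so $A$ is level exactly when $\omega_A$ is generated in degree $2$, and in particular it suffices to exhibit an element of $(\omega_A)_3$ not lying in $A_1\cdot(\omega_A)_2$. Now two elementary observations: (i) every lattice point of $P$ has last coordinate in $\{0,n+1\}$ (just shown); and (ii) every lattice point of $\inte(2P)$ has last coordinate in $\{1,\dots,n\}$ — indeed such a point has $1\le z<(n+1)\min(x,y)$, and if $\min(x,y)\ge 2$ the last facet inequality fails, forcing $\min(x,y)=1$ and $z\le n$. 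Since $A_1\cdot(\omega_A)_2$ is spanned by the monomials corresponding to sums (lattice point of $\inte(2P)$) $+$ (lattice point of $P$), every such monomial has last coordinate in $\{1,\dots,n\}\cup\{n+2,\dots,2n+1\}$. But a direct check against the displayed inequalities (here one uses $m\ge 2$) shows $(2,2,n+1)\in\inte(3P)\cap\ZZ^3$, which therefore gives an element of $(\omega_A)_3$ not in $A_1\cdot(\omega_A)_2$; hence $\omega_A$ has a minimal generator in degree $3$, is not generated in degree $\dim A-s=2$, and $A$ is not level.

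I expect the main obstacle to be purely computational rather than conceptual: pinning down $P\cap\ZZ^3$ and $\inte(P)\cap\ZZ^3$ exactly, and locating enough of $\inte(2P)\cap\ZZ^3$ together with the witness point $(2,2,n+1)\in\inte(3P)\cap\ZZ^3$, requires a careful but elementary case analysis. Everything else — Cohen--Macaulayness and normality of Ehrhart rings, the $\delta$-vector/$h$-vector dictionary, and the description of $\omega_A$ for normal affine semigroup rings — is standard.
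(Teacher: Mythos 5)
Your construction is correct and follows essentially the same route as the paper: an explicit $3$-dimensional lattice simplex whose Ehrhart ring has $h$-vector $(1,h,(h+1)n)$, with non-levelness witnessed by a degree-$3$ interior lattice point (your $(2,2,n+1)$; the paper uses $(1,1,1)$ for its simplex) that cannot be written as a lattice point of the polytope plus a degree-$2$ interior lattice point. The only differences are the choice of simplex and minor bookkeeping --- you determine $\delta_2$ from the normalized volume together with $\delta_0,\delta_1,\delta_3$ instead of enumerating box points as in Lemma~\ref{compute} --- and all your claimed computations (the list $P\cap\ZZ^3$, the fact that points of $\inte(2P)\cap\ZZ^3$ have last coordinate in $\{1,\dots,n\}$, and $(2,2,n+1)\in\inte(3P)\cap\ZZ^3$, which uses $m=h+1\ge 2$) check out.
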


Hence, even if we restrict our attention to Ehrhart rings, Theorem~\ref{main0} has much sense. However we have no combinatorial proof of this theorem in the Ehrhart ring case. 

\section{Preliminaries}
In this section, we collect basic facts we will use in the next section. 
See \cite{BH} for undefined terminology and basic properties of Cohen--Macaulay rings.  
Throughout this section, let $A$ be  a semi-standard graded ring with $d= \dim A$, and $(h_0, h_1, \ldots, h_s)$ its $h$-vector. 
Note that $A$ is a graded local ring with the graded maximal ideal $\mm= \bigoplus_{i > 0} A_i$. 
Since $\dim A = \dim \kk[A_1]$, the ideal of $A$ generated by $A_1$ is $\mm$-primary. Hence, if $|\kk|=\infty$,  we can take a system of parameter $\theta_1, \ldots, \theta_d$ of $A$ from $A_1$.

Let $A^\sat$ denote the {\it saturation} $A/H_\mm^0(A)$ of $A$. It is clear that $A$ and $A^\sat$ define the same projective scheme, that is, we have $\Proj A = \Proj (A^\sat)$. 
In this paper, the {\it homogeneous coordinate ring} of a projective scheme $X \subset \PP^n$ means the {\it standard} graded ring $R$ with $X=\Proj R$ and $R=R^\sat$. Of course, there is a standard graded polynomial ring  $S=\kk[x_0, \ldots, x_n]$ with the graded surjection $f: S \to R$ which induces the inclusion map $\Proj R=X \hookrightarrow \PP^n =\Proj S$. We say $X \subset \PP^n$ is {\it non-degenerate}, if no hyperplane of $\PP^n$ contains $X$, equivalently, $\dim_\kk R_1= n+1$.  

In the rest of this section, we use the following convention.  
\begin{itemize}
\item $B =\kk[A_1]$ is the subalgebra of $A$ generated by $A_1$ over $\kk$.

\item $S=\kk[x_1, \ldots, x_m]$ is a standard  graded polynomial ring, where $m=\dim_\kk A_1=h_1+d$.  Note that $B$ can be seen as a quotient ring of $S$. 
\end{itemize}

Clearly, $A$ is a finitely generated graded $S$-module.  
For a finitely generated graded $S$-module $M$, $\beta_{i,j}^S(M)$ (or just   $\beta_{i,j}(M)$) denotes the graded Betti number $\dim_\kk [\Tor_i^S(\kk,M)]_j$ of $M$. We also set $\beta_i(M) :=\sum_{j \in \ZZ} \beta_{i,j}(M)$. 
Since $A_1 =S_1$, we have $\beta_{i,i}(A)=0$ for all $i >1$. 
If $A$ is Cohen--Macaulay, we have  
\begin{equation}\label{Betti dual}
\beta_{i,j}(\omega_A)=\beta_{m-d-i, m-j}(A),
\end{equation}
where $\omega_A$ is the canonical module of $A$. 
Let $r(A)$ denote the number of minimal generators of $\omega_A$ as a graded $A$-module, and call it the {\it Cohen--Macaulay type} of $A$. Clearly, $A$ is level if and only if $h_s =r(A)$.

For a finitely generated graded $S$-module $M$, 
$$\reg_S(M) :=\max \{ j-i \mid \beta_{i,j}(M) \ne 0 \}$$
is called the {\it Catelnuovo-Mumford regularity} of $M$ (\cite{EG}). 
While the theory of  Catelnuovo-Mumford regularities is very deep, we only use  elementary properties.
For example, if $A$ is Cohen--Macaulay, then we have $\reg A=s$.

The following easy result might be well-known to the experts, but we give the proof for the reader's convenience. 

\begin{lem}\label{h1=0}
If a Cohen--Macaulay semi-standard graded ring $A$ has the $h$-vector of the form $(h_0, 0, h_2)$, then $A$ is level.  
\end{lem}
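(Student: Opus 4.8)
\smallskip
\noindent\emph{Proof strategy.}
The plan is to pass to an Artinian reduction of $A$ and read off its Cohen--Macaulay type from its socle. The whole point is that the hypothesis $h_1=0$ forces the degree-one component of this reduction to vanish, after which the socle is completely transparent.

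First I would arrange that $|\kk|=\infty$: extending the base field changes neither the Hilbert function nor the Cohen--Macaulay type (the canonical module commutes with this flat base change), and $A$ is level if and only if $A\otimes_\kk\kk'$ is, so we may assume $\kk$ is infinite. Then, since $A$ is Cohen--Macaulay of dimension $d$ and the ideal of $A$ generated by $A_1$ is $\mm$-primary, I can choose a linear system of parameters $\theta_1,\dots,\theta_d\in A_1$, which is automatically an $A$-regular sequence, and set $\bar A:=A/(\theta_1,\dots,\theta_d)$.

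Next I record the structure of $\bar A$. Because $\theta_1,\dots,\theta_d$ is a regular sequence of linear forms, $\bar A$ is graded Artinian with Hilbert series equal to the $h$-polynomial $1+h_2t^2$; and $h_2>0$ since $A$ is Cohen--Macaulay with $h_s\ne 0$. Thus $\bar A_0=\kk$, $\bar A_1=0$, $\dim_\kk\bar A_2=h_2$, and $\bar A_j=0$ for $j\ge 3$. The vanishing $\bar A_1=0$ is what makes everything easy: the graded maximal ideal of $\bar A$ is simply $\bar A_2$, and $\bar A_2\cdot\bar A_2\subseteq\bar A_4=0$ while $\bar A_2=\bar A_2\cdot\bar A_0\ne 0$, so $\operatorname{soc}(\bar A)=(0:_{\bar A}\bar A_2)=\bar A_2$, a $\kk$-vector space of dimension $h_2$.

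Finally I would transfer this back to $A$. The Cohen--Macaulay type is preserved under reduction modulo a regular sequence: $\omega_A$ is maximal Cohen--Macaulay, hence $\theta_1,\dots,\theta_d$ is $\omega_A$-regular, $\omega_{\bar A}\cong\omega_A/(\theta_1,\dots,\theta_d)\omega_A$ up to a degree shift, and consequently $r(A)=r(\bar A)$; and for the Artinian ring $\bar A$, Matlis duality identifies $\omega_{\bar A}$ with $\operatorname{Hom}_\kk(\bar A,\kk)$, so $r(\bar A)=\dim_\kk\operatorname{soc}(\bar A)=h_2$. Therefore $r(A)=h_2=h_s$, i.e., $A$ is level. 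I do not anticipate any real obstacle: beyond elementary manipulations, the only inputs are the invariance of the Cohen--Macaulay type under a linear regular sequence and its identification with the socle dimension in the Artinian case, both standard (see \cite{BH}). An alternative route would be to note that $h_1=0$ forces $B=\kk[A_1]$ to be a polynomial ring in $d$ variables, so that $A$ is the graded free $B$-module $B\oplus B(-2)^{h_2}$ by its Hilbert series, and then to analyze $\omega_A=\operatorname{Hom}_B(A,\omega_B)$ directly; but the Artinian reduction is the shortest path.
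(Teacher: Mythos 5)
Your proof is correct, and its first half coincides with the paper's: both pass to the Artinian reduction $\bar A=A/(\theta_1,\ldots,\theta_d)$ by a linear system of parameters, noting that the $h$-vector and levelness (equivalently, the type) are unchanged. The difference is the endgame. The paper reads off the Hilbert function of $\omega_{\bar A}$ (dimension $h_2$ in degree $-2$, dimension $1$ in degree $0$) and argues by contradiction: if $\bar A$ were not level, then $\mm\cdot(\omega_{\bar A})_{-2}=0$ because the target $(\omega_{\bar A})_0$ is one-dimensional, so $\omega_{\bar A}$ would split as $(\omega_{\bar A})_{-2}\oplus(\omega_{\bar A})_0$, contradicting the indecomposability of the canonical module (the injective hull of $\kk$ in the Artinian case). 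You instead argue directly on the ring: since $\bar A_1=0$, the maximal ideal of $\bar A$ is $\bar A_2$, the socle is exactly $\bar A_2$ of dimension $h_2$, and hence $r(A)=r(\bar A)=h_2=h_s$, which is the paper's stated criterion for levelness. Your route trades the indecomposability of $\omega$ for the (equally standard) facts that the type is preserved modulo a linear regular sequence and equals the socle dimension for Artinian rings; it is a direct proof rather than one by contradiction, and it does not need $h_0=1$, only that degree-zero elements are units. The only caveat concerns your parenthetical alternative: the decomposition $A\cong B\oplus B(-2)^{h_2}$ over the polynomial ring $B=\kk[A_1]$ gives $\omega_A\cong B(-d)\oplus B(2-d)^{h_2}$ only as a $B$-module, and one would still have to analyze the $A$-action to conclude generation in degree $d-2$; but your main argument does not rely on this and is complete as written.
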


\begin{proof}
We may assume that $|\kk| = \infty$. So we can take a system of parameter $\{\theta_1, \ldots, \theta_d\} \subset A_1$. Then  $A/(\theta_1, \ldots, \theta_d)$ has the same $h$-vector as $A$,  and $A$ is level if and only if so is $A/(\theta_1, \ldots, \theta_d)$. Hence we may assume that $\dim A=0$. In this case, we have  
$$\dim_\kk (\omega_A)_i = \begin{cases}
h_s & \text{if $i=-2$,}\\
1 \,(=h_0)   & \text{if $i=0$,}\\
0 & \text{otherwise.}
\end{cases}$$
If $A$ is not level, then $\mm \cdot (\omega_A)_{-2}=0$ and $\omega_A = (\omega_A)_{-2} \oplus  (\omega_A)_0$ as an $A$-module. This is a contradiction, since $\omega_A$
is indecomposable in general. 
\end{proof}
 
\begin{lem}\label{level by Betti}
Let $A$ be a Cohen--Macaulay semi-standard graded ring with the $h$-vector $(h_0, h_1, h_2)$.  Assume that $c:=h_1(=m-d)>0$.  Then $A$ is level if and only if  $\beta_{c, c+1}(A)=0$. 
\end{lem}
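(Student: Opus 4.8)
The plan is to read off the minimal generators of the canonical module $\omega_A$ from the last term of the minimal $S$-free resolution of $A$, using the Betti-number duality \eqref{Betti dual}, and then to compare this with what the Hilbert series of $\omega_A$ already forces in low degrees.

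First I would pin down the last column of the resolution. The minimal $S$-free resolution of $A$ has length $c = m-d$ (as $A$ is Cohen--Macaulay of codimension $c$), so the relevant Betti numbers are the $\beta_{c,j}(A)$. Writing the $h$-vector as $(h_0,h_1,h_2)$ means $h_2 \ne 0$, i.e. $s = 2$, hence $\reg A = 2$, so $\beta_{c,j}(A) = 0$ for $j > c+2$; and $\beta_{c,c}(A) = 0$, since $\beta_{i,i}(A)=0$ for $i>1$, while for $c=1$ the isomorphism $S_1 \cong A_1$ forces $\beta_{1,1}(A)=0$ (the defining ideal of $A$ in $S$ contains no linear form). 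Thus the last column of the resolution consists of at most the two entries $\beta_{c,c+1}(A)$ and $\beta_{c,c+2}(A)$.

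Next I would dualize. By \eqref{Betti dual} we have $\beta_{0,j}(\omega_A) = \beta_{c,m-j}(A)$, so the minimal generators of $\omega_A$ occur only in degrees $j$ with $m-j \in \{c+1,c+2\}$, i.e. $j=d-2$ or $j=d-1$, with $\beta_{0,d-2}(\omega_A) = \beta_{c,c+2}(A)$ and $\beta_{0,d-1}(\omega_A) = \beta_{c,c+1}(A)$. On the other hand, the Hilbert series formula for $\omega_A$ gives $(\omega_A)_j = 0$ for $j < d-2$ and $\dim_\kk(\omega_A)_{d-2} = h_2$; since $\mm\,\omega_A$ lives in degrees $\ge d-1$, this yields $\beta_{0,d-2}(\omega_A) = \dim_\kk(\omega_A)_{d-2} = h_2$. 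Hence the Cohen--Macaulay type is
$$r(A) = \beta_{0,d-2}(\omega_A) + \beta_{0,d-1}(\omega_A) = h_2 + \beta_{c,c+1}(A),$$
and since $A$ is level iff $r(A) = h_s = h_2$ — equivalently, iff $\omega_A$, whose bottom degree already contributes $h_2>0$ generators, has no generator in degree $d-1$ — the lemma follows.

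I do not expect a genuine obstacle: the argument is essentially bookkeeping with \eqref{Betti dual} and the equality $\reg A = 2$. The two points I would be careful about are the case $c=1$, where the cited vanishing $\beta_{i,i}(A)=0$ $(i>1)$ does not apply and one must invoke $S_1 \cong A_1$ separately, and the fact that the hypothesis $h_2 \ne 0$ (i.e. $s=2$, i.e. $\reg A = 2$) is genuinely used — for an $h$-vector such as $(1,h_1)$ the last column of the resolution can sit entirely in degree $c+1$, and the stated equivalence would fail.
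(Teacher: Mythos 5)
Your argument is correct and follows essentially the same route as the paper's proof: bound the last column of the resolution using $\reg A = 2$ (plus $\beta_{c,c}(A)=0$), dualize via \eqref{Betti dual}, and read off whether $\omega_A$ has a generator in degree $d-1$; your extra computation $\beta_{0,d-2}(\omega_A)=h_2$ and the passage through $r(A)$ is exactly the content of the paper's Lemma~\ref{CM type}, proved immediately afterwards. The only point deserving one explicit line is that $r(A)$ counts minimal generators of $\omega_A$ as an $A$-module while the $\beta_{0,j}(\omega_A)$ are $S$-Betti numbers; since $A_1=S_1$ and $\omega_A$ vanishes below degree $d-2$, the two counts coincide, which is precisely the remark the paper makes.
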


\begin{proof}
Since $\reg A =2$, we have $\beta_{c,j}(A)=0$ for all $j \ne c+1, c+2$. Hence if 
$\beta_{c, c+1}(A)=0$, then $\beta_0(\omega_A) = \beta_{0,d-2}(\omega_A)$ by \eqref{Betti dual}. 
It means that $\omega_A$ is generated by $(\omega_A)_{d-2}$ as an $S$-module, but it clearly implies that   $\omega_A$ is generated by $(\omega_A)_{d-2}$ as an $A$-module. Hence $A$ is level. 

Next we assume that $\beta_{c,c+1}(A) \ne 0$. Then $\beta_{0, d-1}(\omega_A) \ne 0$, and hence $S_1 \cdot (\omega_A)_{d-2} \subsetneq (\omega_A)_{d-1}$. 
Since $S_1 =A_1$, we have $A_1 \cdot  (\omega_A)_{d-2} \subsetneq (\omega_A)_{d-1}$, and $A$ is not level.  
\end{proof}

\begin{lem}\label{CM type}
If  a Cohen--Macaulay semi-standard graded ring $A$ has  the $h$-vector $(h_0, h_1, h_2)$ with $c:=h_1>0$,  then we have $r(A)=\beta_c(A)$.  
\end{lem}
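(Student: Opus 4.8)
The plan is to show that the minimal free resolution of $\omega_A$ over $S$ is \emph{linear}, i.e.\ concentrated in the single homological position where a free summand in degree $d-2$ can occur together with one in degree $d-1$, and that in fact the last free module of the resolution of $\omega_A$ has all its generators in a consecutive pair of degrees, so that no cancellation can occur when passing to minimal generators as an $A$-module. Concretely, by \eqref{Betti dual} we have $\beta_{i,j}(\omega_A)=\beta_{c-i,\,m-j}(A)$ (using $m-d=c$), so $\beta_0(\omega_A)=\sum_j\beta_{0,j}(\omega_A)=\sum_j\beta_{c,m-j}(A)=\beta_c(A)$. Thus the content of the lemma is the equality $r(A)=\beta_0^S(\omega_A)$: the number of minimal generators of $\omega_A$ as an $A$-module equals the number of minimal generators of $\omega_A$ as an $S$-module.

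First I would reduce to $\dim A=0$ exactly as in Lemma~\ref{h1=0}: pass to $|\kk|=\infty$, choose a linear system of parameters $\theta_1,\dots,\theta_d\in A_1$, and note that $\bar A:=A/(\theta_1,\dots,\theta_d)$ has the same $h$-vector $(h_0,h_1,h_2)$, that $\omega_{\bar A}\cong \omega_A\otimes_A\bar A$ up to a degree shift, that the minimal number of $A$-generators of $\omega_A$ equals the minimal number of $\bar A$-generators of $\omega_{\bar A}$ (Nakayama), and that $\beta_i^S(A)=\beta_i^{\bar S}(\bar A)$ for $\bar S=S/(\theta_1,\dots,\theta_d)$, again a polynomial ring on $c=h_1$ variables. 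So we may assume $A$ is an Artinian graded $\kk$-algebra with $\dim_\kk A_0=1$, $\dim_\kk A_1=c$, $\dim_\kk A_2=h_2$, and $A_i=0$ for $i\ge 3$; here $S=\bar S$ has $c$ variables and $c=m-d$ in the new notation.

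The key point is then that the minimal free resolution of $\omega_A$ over $S$ has length exactly $c=\dim S$ (since $\omega_A$ is a maximal Cohen--Macaulay, hence free-resolution-length-$0$... no: $A$ is Artinian so $\mathrm{pd}_S\omega_A=c$), and — because $\reg_S A=2$ forces $\reg_S\omega_A = -(d-s) \;=\; \text{(in the Artinian case) } -(-2)=2$ after the shift, equivalently $\omega_A$ has regularity $0$ when normalized so its socle-dual generators sit in degree $0$ — the last free module $F_c$ in the resolution of $\omega_A$ is generated in the two consecutive degrees corresponding to $\beta_{c,c+1}(A)$ and $\beta_{c,c+2}(A)$. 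Dually, $F_0$ for $\omega_A$ sits in degrees $m-(c+2)=d-2$ and $m-(c+1)=d-1$. Now I claim that $A_1\cdot(\omega_A)_{d-2}$ together with the degree-$(d-1)$ part already account for all of $\omega_A$ in degrees $\le d-1$, and that $(\omega_A)_{j}=0$ for $j\ge d$ (by the Hilbert series formula, since in the Artinian case the canonical module is concentrated in degrees $-2,-1,0$, i.e.\ $d-2,d-1,d-s$ range), so $\omega_A$ is generated \emph{as an $A$-module} in degrees $d-2$ and $d-1$; hence the number of $A$-module generators is $\beta_{0,d-2}^S(\omega_A)+\beta_{0,d-1}^S(\omega_A)=\beta_0^S(\omega_A)=\beta_c(A)$, where the first equality is because $S_1=A_1$, so $S_+\omega_A$ and $\mathfrak m\,\omega_A$ agree in the relevant degrees and a minimal $S$-generating set of $\omega_A$ is also a minimal $A$-generating set. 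The only subtlety — and the step I expect to be the main obstacle — is ruling out that an $S$-minimal generator of $\omega_A$ in degree $d-1$ becomes redundant over $A$: this cannot happen because redundancy over $A$ in degree $d-1$ would mean that generator lies in $A_1\cdot(\omega_A)_{d-2}=S_1\cdot(\omega_A)_{d-2}$, contradicting $S$-minimality. Thus no cancellation occurs, and $r(A)=\beta_c(A)$.

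A cleaner way to package the last paragraph, which I would actually write, is: since $\reg_S\omega_A$ places all of $\beta_\bullet^S(\omega_A)$ so that $F_0(\omega_A)$ lives in degrees $d-2$ and $d-1$ and $\omega_A$ vanishes in degrees $<d-2$, while $\mathfrak m=S_+\!\cdot A$ because $A$ is semi-standard, the graded Nakayama lemma over $A$ and over $S$ compute the \emph{same} minimal generators in each of the degrees $d-2,d-1$; summing gives $r(A)=\beta_{0,d-2}^S(\omega_A)+\beta_{0,d-1}^S(\omega_A)=\beta_0^S(\omega_A)$, and \eqref{Betti dual} converts this to $\beta_{c,c+1}(A)+\beta_{c,c+2}(A)=\beta_c(A)$ since $\reg_S A=2$ kills $\beta_{c,j}(A)$ for $j\ne c+1,c+2$.
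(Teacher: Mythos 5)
Your proposal follows essentially the same route as the paper's proof: use \eqref{Betti dual} together with $\reg_S A=2$ to get $\beta_c(A)=\beta_0^S(\omega_A)=\beta_{0,d-2}(\omega_A)+\beta_{0,d-1}(\omega_A)$, and then argue that, because $S_1=A_1$, the minimal generators of $\omega_A$ over $A$ and over $S$ coincide. The Artinian reduction you start with is harmless but unnecessary; the degree-wise comparison works in any dimension. However, two assertions in your write-up are false as stated and should be removed. First, the claim that $(\omega_A)_j=0$ for $j\ge d$ is wrong: by the Hilbert series, $\omega_A$ is nonzero in every degree $\ge d-2$ when $d\ge 1$, and even in your Artinian reduction $(\omega_A)_0\cong\kk$ since $h_0=1$. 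Second, the equality $\mm=S_+\cdot A$ is precisely what fails for semi-standard (as opposed to standard) graded rings: semi-standardness only gives that the ideal $A_1A$ is $\mm$-primary, and when $h_2\ne 0$ the ring $A$ has algebra generators in degree $2$, so in general $\mm\supsetneq A_1A$.

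Neither error is fatal, because the correct argument is already present in your text. Since $\beta_{0,j}^S(\omega_A)=\beta_{c,m-j}(A)$ vanishes for $j\notin\{d-2,d-1\}$ (for $j<d-2$ this is regularity, or simply the vanishing of $\omega_A$ in those degrees; for $j\ge d$ one needs $\beta_{c,c}(A)=0$, which follows from $S_1=A_1$ as recorded in the paper's preliminaries --- regularity alone does not kill $\beta_{c,c}(A)$), the module $\omega_A$ is generated over $S$, hence a fortiori over $A$, by elements of degrees $d-2$ and $d-1$; and in these two degrees the Nakayama quotients over $A$ and over $S$ agree because $(\mm\omega_A)_{d-2}=0=(S_+\omega_A)_{d-2}$ and $(\mm\omega_A)_{d-1}=A_1\cdot(\omega_A)_{d-2}=S_1\cdot(\omega_A)_{d-2}=(S_+\omega_A)_{d-1}$, using only that $\omega_A$ vanishes below degree $d-2$ and that $S_1=A_1$. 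This is exactly the redundancy argument you give in your penultimate paragraph; with the two false claims deleted and this substituted, your proof is the paper's proof.
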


\begin{proof}
As we have seen in the proof of Lemma~\ref{level by Betti}, we have $\beta_c(A)=\beta_0(\omega_A)= \beta_{0,d-2}(\omega_A)+ \beta_{0,d-1}(\omega_A)$. Since $A_1=S_1$, the number of minimal generators of $\omega_A$ as a graded $A$-module is equal to that as a graded $S$-module. 
\end{proof}

Assume that   $\kk$  is an algebraically closed field. 
It is a classical result that if $B$ is a domain (but not necessarily Cohen--Macaulay) then we have $$\deg B \ge \codim B+1,$$ where $\codim B:= \dim_\kk B_1-\dim B = m-d=h_1$. 
If the equality holds, then $B$ is Cohen--Macaulay. Moreover, Del Pezzo--Bertini's theorem gives a classification of standard graded domains $B$ with  $\deg B = \codim B+1$ (see, for example, \cite[Theorem~4.3]{EG}). 
In particular, if $\dim B=2$, then $B$ is the homogeneous coordinate ring of a rational normal curve.  

\section{$h_1+1$ divides $h_2$}
In this section, we always assume that the  base field $\kk$  is an algebraically closed field of characteristic 0. 
Let $X$ be  a finite set of points in the projective space $\PP^n=\Proj 
(\kk[x_0, \ldots, x_n])$, and $R$ its homogeneous coordinate ring. Then $R$ is  a Cohen--Macaulay standard graded ring with $\dim R=1$ and $\deg R = \# X$. We define the function $H_X:\ZZ \to \NN$ by $H_X(i)=\dim_\kk R_i$.  
If  $(h_0, h_1, \ldots, h_s)$ is the $h$-vector of $R$, we have $s=\min \{ i \mid H_X(i) = \# X\}$ and $h_i= H_X(i) -H_X(i-1)$ for all $0 \le i \le s$. 

\begin{dfn}[c.f. \cite{ACGH}]
Let $X \subset \PP^n$ be a finite set of points.  
We say that $X$ is in {\it uniform position}, if $H_X(1)=n+1$ (i.e., $X$  is non-degenerate) and every subset $Y \subset X$ satisfies $H_Y(i) = \min \{ H_X(i), \# Y \}$ for all $i$.
\end{dfn}

The usual definition of the uniform position property does not assume that $H_X(1)=n+1$, while many important examples satisfy it. Here we use the above definition for a quick exposition. 

Note that if $X \subset \PP^n$ is a finite set of points in uniform position, and $Y \subset X$ is a subset with $\# Y \ge n+1$, then $Y \subset \PP^n$ is in uniform position again.

The following  fundamental result is due to J. Harris.  An application of this result to commutative algebra is found in the paper \cite{Y} of the second author. 

\begin{theorem}[Uniform Position Theorem, {\cite[P. 113]{ACGH}}]\label{UPL}
If $C \subset \PP^n$ is a reduced, irreducible and non-degenerate curve, 
then a general hyperplane section $C \cap H$ is a set of points in uniform position in $H \cong \PP^{n-1}$. 
\end{theorem}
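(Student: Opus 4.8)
The plan is to prove the stronger \emph{uniform position principle}, that the monodromy group of a general hyperplane section is the full symmetric group, and then to deduce the stated Hilbert-function equality. So let $C \subset \PP^n$ be reduced, irreducible and non-degenerate of degree $d$, and let $(\PP^n)^\vee$ be the dual projective space of hyperplanes. Form the incidence variety $I = \{(p,H) \mid p \in C \cap H\} \subset C \times (\PP^n)^\vee$ with its second projection $\pi \colon I \to (\PP^n)^\vee$. On the Zariski-dense open set $U \subset (\PP^n)^\vee$ consisting of hyperplanes that avoid the (finite) singular locus of $C$ and are nowhere tangent to $C$, the map $\pi$ restricts to a finite \'etale cover of degree $d$, so $\pi_1(U)$ acts on the $d$ points of a general section and produces a monodromy group $G \leq \mathfrak{S}_d$; everything reduces to proving $G = \mathfrak{S}_d$.

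First, $G$ is transitive: the other projection $I \to C$ is a projective bundle, so $I$ is irreducible because $C$ is, and an irreducible cover has transitive monodromy. The same device gives $2$-transitivity: the variety $I_2 = \{(p,q,H) \mid p \neq q,\ p,q \in C \cap H\}$ fibers over the irreducible $(C \times C) \setminus \Delta$ with linear fibers, hence is irreducible, which forces $G$ to be transitive on ordered pairs of distinct points. Next I would produce a transposition in $G$. Since $C$ is non-degenerate and not a line, its dual variety $C^\vee \subset (\PP^n)^\vee$ is an irreducible hypersurface, and in characteristic $0$ a general (smooth) point of $C^\vee$ is a hyperplane $H_0$ simply tangent to $C$ at a single smooth point and transverse elsewhere; a small loop around such a point has monodromy equal to the transposition exchanging the two section points that collide at the tangency, the local model of $\pi$ near there being $z \mapsto z^2$ together with $d-2$ trivial sheets. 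Finally, a $2$-transitive---hence primitive---subgroup of $\mathfrak{S}_d$ that contains a transposition equals $\mathfrak{S}_d$ by Jordan's theorem, so $G = \mathfrak{S}_d$.

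It remains to deduce the combinatorial statement. Let $X = C \cap H$ with $H \in U$ general, so $\# X = d$; non-degeneracy of $X$ in $H \cong \PP^{n-1}$ (that is, $H_X(1) = n$) is the classical general position lemma. Fix $k \leq d$ and let $\Sigma_k$ be the incidence variety of pairs $(Y,H)$ with $H \in U$ and $Y \subseteq C \cap H$ a $k$-element subset. Since $G = \mathfrak{S}_d$, the finite cover $\Sigma_k \to U$ is irreducible, so the upper-semicontinuous function $(Y,H) \mapsto H_Y(i)$ is constant, say $= h_i(k)$, on a dense open subset of $\Sigma_k$; as the projection is finite, outside a proper closed subset of $U$ \emph{every} $k$-subset of $C \cap H$ has Hilbert function $h_\bullet(k)$. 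For such a general $H$ the value $h_i(k)$ is computed on a generic $k$-subset $Y$, which may be built up one generic point of $C$ at a time; since no nonzero form of degree $i$ in the coordinate ring of $X$ vanishes on all of $X$, the points of $Y$ impose independent conditions as long as they can, and hence $h_i(k) = \min\{H_X(i), k\}$. Therefore every subset $Y \subseteq X$ satisfies $H_Y(i) = \min\{H_X(i), \# Y\}$ for all $i$, which is the asserted uniform position property.

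The crux, and the only place where characteristic $0$ is genuinely used, is the transposition step: one needs that the dual variety of a non-degenerate curve is a reduced hypersurface whose generic point is a hyperplane simply tangent at one smooth point, and that encircling such a point swaps exactly the two colliding section points. In positive characteristic this can fail---a non-degenerate curve may be strange, with all its tangent lines passing through a fixed point---which is precisely why the hypothesis is imposed; granting transitivity, $2$-transitivity and a transposition, the identity $G = \mathfrak{S}_d$ is purely group-theoretic, and the passage to the combinatorial statement is a routine semicontinuity argument. A minor further point is to check that a generic $k$-subset of a generic hyperplane section really behaves like $k$ general points of $C$, so that the independent-conditions argument applies verbatim.
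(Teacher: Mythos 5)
The paper does not prove this theorem at all: it is quoted as a classical result of Harris with a citation to \cite[p.~113]{ACGH}, and your monodromy argument (irreducibility of the incidence correspondences for transitivity and $2$-transitivity, a simple tangent hyperplane giving a transposition, then semicontinuity of the Hilbert function on the irreducible cover of $k$-subsets) is precisely the standard proof found in that source, so in substance you are reproducing the cited argument correctly. The only point worth tightening is the last step: for $k>n$ a generic $k$-subset of a general section is \emph{not} the same as $k$ general points of $C$, but this is not needed --- the greedy choice of points of $X$ itself, using the injectivity of $R_i\hookrightarrow \kk^X$, produces one subset with $H_Y(i)=\min\{H_X(i),\#Y\}$, and monodromy-constancy then gives it for all subsets.
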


The following lemma must be well-known to the specialists, and there are several proofs. 
We will give one of them for the reader's convenience.

\begin{lem}\label{uniform level}
Let $X \subset \PP^n$ be a finite set of points in uniform position,  and 
$S=\kk[x_0, \ldots, x_n]$ (resp. $R$) the homogeneous coordinate rings of $\PP^n$ (resp. $X$). 
If $\deg R =\#X > n+1$, then $\beta_{n,n+1}^S(R)=0$. 
\end{lem}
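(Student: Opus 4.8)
The plan is to reduce the vanishing $\beta_{n,n+1}^S(R)=0$, via duality and an Artinian reduction, to a statement about the multiplication on the degree-one piece of $R$, and then to extract it from a short linear-algebra computation into which the uniform position hypothesis enters at exactly one place. First I would pass to a general Artinian reduction: since $\kk$ is infinite, a general linear form $\theta$ is a nonzerodivisor on $R$, so $\beta_{i,j}^S(R)=\beta_{i,j}^{\bar S}(\bar R)$ for $\bar R:=R/\theta R$ and $\bar S:=S/\theta S\cong\kk[y_1,\dots,y_n]$. As $\bar R$ is Artinian, $\operatorname{pd}_{\bar S}\bar R=n$, and \eqref{Betti dual} applied to $\bar R$ (with $d=0$ and ambient ring $\bar S$) identifies $\beta_{n,n+1}^{\bar S}(\bar R)$ with the number of degree-$(-1)$ minimal generators of $\omega_{\bar R}=\operatorname{Hom}_\kk(\bar R,\kk)$, i.e. with $\dim_\kk\operatorname{soc}(\bar R)_1$. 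So it suffices to prove $\operatorname{soc}(\bar R)_1=0$: no nonzero $v\in\bar R_1$ is annihilated by $\bar R_1$ inside $\bar R_2$.

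Next I would make this concrete. Put $N=\#X$, choose representatives $\widetilde p\in\kk^{\,n+1}$ of the points of $X$ with $\theta(\widetilde p)=1$, and set $V_j:=\{(f(\widetilde p))_{p\in X}\mid f\in S_j\}\subseteq\kk^N$. Evaluation at the $\widetilde p$ identifies $\bar R_j$ with $V_j/V_{j-1}$ (the inclusion $V_{j-1}\subseteq V_j$ holds because $\theta$ has value $1$ at every $\widetilde p$), and the multiplication $\bar R_1\times\bar R_1\to\bar R_2$ becomes coordinatewise multiplication on $\kk^N$. Here $V_1=R_1$ has dimension $n+1$ by non-degeneracy and contains the constant vector $\mathbf 1=V_0$. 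Hence $\operatorname{soc}(\bar R)_1\ne0$ would produce a non-constant $\lambda\in V_1$ with $\operatorname{diag}(\lambda)\,V_1\subseteq V_1$, and I must rule this out.

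So suppose such a $\lambda$ exists, with distinct values $\mu_1,\dots,\mu_r$ ($r\ge2$) and induced partition $X=X_1\sqcup\cdots\sqcup X_r$, where $p\in X_i\iff\lambda_p=\mu_i$ and $N_i:=\#X_i$. The eigenspaces of $\operatorname{diag}(\lambda)$ are the coordinate subspaces $E_i=\{v\in\kk^N\mid v_p=0\text{ for }p\notin X_i\}$, so $\operatorname{diag}(\lambda)$-stability of $V_1$ gives $V_1=\bigoplus_{i=1}^r(V_1\cap E_i)$. Now $V_1\cap E_i$ is the set of evaluation vectors of linear forms vanishing on $X\setminus X_i$, so $\dim(V_1\cap E_i)=(n+1)-H_{X\setminus X_i}(1)$; by the uniform position of $X$, $H_{X\setminus X_i}(1)=\min\{n+1,\,N-N_i\}$, hence $\dim(V_1\cap E_i)=\max\{0,\,N_i-(N-n-1)\}$. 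Summing, $\sum_i\dim(V_1\cap E_i)=\dim V_1=n+1$, and using $N-n-1\ge1$ (as $N>n+1$) and $r\ge2$, a one-line case analysis on the number $t$ of indices with $N_i>N-n-1$ gives a contradiction: $t=0$ makes the sum $0$; $t=1$ forces the single large part to have size $N$; $t\ge2$ makes the sum at most $2n+2-N<n+1$. Therefore $\lambda$ is constant, $\operatorname{soc}(\bar R)_1=0$, and $\beta_{n,n+1}^S(R)=0$.

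The routine parts are the duality in the first step and the elementary counting in the last. The step that needs care is the middle reduction — arranging the concrete model so that ``$v\in\operatorname{soc}(\bar R)_1$'' is literally ``$\operatorname{diag}(\lambda)$ stabilizes $V_1$'', with the correct bookkeeping of the degree shift caused by $\theta$ — after which the uniform position hypothesis is used exactly once, in the identity $\dim(V_1\cap E_i)=(n+1)-H_{X\setminus X_i}(1)$.
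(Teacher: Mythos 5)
Your proof is correct, but it takes a genuinely different route from the paper's. The paper argues by induction on $\#X$: it peels off one point via the exact sequence $0\to I\to R\to R'\to 0$, where uniform position forces the ideal $I$ to start in degree $s\ge 2$ so that $\beta_{n,n+1}(R)\le\beta_{n,n+1}(R')$, and it settles the base case $\#X=n+2$ by invoking Kreuzer's Cayley--Bacharach criterion to conclude that $R$ is Gorenstein with socle concentrated in degree $n+2$. You instead pass to a general Artinian reduction $\bar R$, identify $\beta_{n,n+1}^S(R)$ with $\dim_\kk\operatorname{soc}(\bar R)_1$ by duality, realize the multiplication concretely on evaluation vectors in $\kk^{\#X}$, and rule out a degree-one socle element by decomposing $V_1$ into eigenspaces of $\operatorname{diag}(\lambda)$ and counting via $H_{X\setminus X_i}(1)=\min\{n+1,\#X-\#X_i\}$; the degree bookkeeping with $\theta(\widetilde p)=1$ and the case analysis on the number of ``large'' parts are all in order. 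What your argument buys: it is self-contained (no appeal to Kreuzer's result or to Gorensteinness of $n+2$ general points), and it only uses the degree-one instance of the uniform position property, i.e.\ that the points are non-degenerate and in linearly general position, so it actually proves a slightly stronger statement than the lemma as stated. What the paper's induction buys is brevity and the flexibility of the point-removal exact sequence, which exploits the full uniform position hypothesis (vanishing of $I$ in degrees below $s$) and is the standard tool in this circle of ideas; for the application in Proposition~\ref{deg B > c+1} either proof suffices.
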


\begin{proof}
Let $(h_0, h_1, \ldots, h_s)$ be the $h$-vector of $R$. Since $h_0=1$, $h_1 =n$ and $\# X = \deg R= \sum_{i=0}^s h_i$, 
we have $s \ge 2$.  We prove the assertion by induction on $\deg R$. 
If $\# X=n+2$, then it is easy to see that 
$$
H_X(i)=\begin{cases}
1 & \text{if $i=0$}, \\ 
n+1 & \text{if $i=1$}, \\
n+2 & \text{if $i \ge 2$}.\\  
\end{cases}
$$
So \cite[Corollary~2.5]{Kr} implies that $R$ is a Gorenstein (note that the Cayley-Bacharach property is weaker than the uniform position property).   Hence we have $\beta_n(R)=\beta_{n,n+2}(R)=1$  and $\beta_{n,n+1}(R)=0$.  

Next assume that $\# X  >  n+2$. Set $X':= X \setminus \{ p\}$ for a point $p \in X$, 
and let $R'$ be the homogeneous coordinate ring of $X'$. Consider the exact sequence 
\begin{equation}\label{IRR'}
0 \to I \to R  \to R' \to 0.
\end{equation}
Since $X$ is in uniform position, we have $\min\{ \, i \mid I_i \ne 0 \, \} = s \ge 2$. 
Applying $\Tor_\bullet^S(\kk,-)$ to the sequence \eqref{IRR'}, we have $\beta_{n,n+1}(I)=0$, and hence
 $\beta_{n,n+1}(R) \le \beta_{n,n+1}(R')=0$. 
Here the last equality is the induction hypothesis, since $X' \subset \PP^n$ is in uniform position again. 
 \end{proof}

%then $B$ is the homogeneous coordinate ring of a variety of minimal degree, which is one of the following three types:
%\begin{itemize}
%\item[(a)]  a quadratic hypersurface;
%\item[(b)] (the cone over) the Veronese surface $\PP^2 \subset \PP^5$;
%\item[(c)] (the cone over) a rational normal scroll.
%\end{itemize}
%In particular, if $B$ is a domain with  $\deg B = \codim B$, then $B$ is Cohen--Macaulay. 

\begin{prop}\label{deg B > c+1}
Let $A$ be a Cohen--Macaulay semi-standard graded ring with the $h$-vector $(h_0,h_1,h_2)$. Assume that $B=\kk[A_1]$ is a domain  
(then $\deg B \ge \codim B +1 = h_1 +1$).  If $\deg B > h_1 +1$, then $A$ is level.  
\end{prop}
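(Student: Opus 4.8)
The plan is to reduce to $\dim A=1$ by a general linear section and then recognise the resulting ring as a module over the homogeneous coordinate ring of a set of points in uniform position, so that Lemma~\ref{uniform level} applies. We may assume $\kk$ is infinite and $h_1>0$ (otherwise $B=\kk[A_1]$ is a polynomial ring, $\deg B=1=h_1+1$, and there is nothing to prove); then $d=\dim B\ge 2$, since $B$ is a non-degenerate domain in $\PP^{m-1}$. By Lemma~\ref{level by Betti} it suffices to show $\beta^S_{c,c+1}(A)=0$ with $c=h_1$. Pick general linear forms $\theta_1,\dots,\theta_{d-1}\in A_1$, put $\mathfrak a=(\theta_1,\dots,\theta_{d-1})$, and set $\bar A=A/\mathfrak aA$, $\bar B=\kk[\bar A_1]\subseteq\bar A$, $\bar S=S/\mathfrak a$ (a polynomial ring in $h_1+1$ variables). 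For a sufficiently general choice the $\theta_i$ are a regular sequence on the Cohen--Macaulay ring $A$ and on the domain $B$, and the section of the integral variety $\Proj B$ by $V(\mathfrak a)$ is reduced. Then $\bar A$ is a one-dimensional Cohen--Macaulay semi-standard graded ring with $h$-vector $(1,h_1,h_2)$, $\beta^{\bar S}_{i,j}(\bar A)=\beta^S_{i,j}(A)$, and $A$ is level iff $\bar A$ is; so it is enough to prove $\beta^{\bar S}_{h_1,h_1+1}(\bar A)=0$.

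Next I would produce the points. Cutting the integral non-degenerate variety $\Proj B$ (of dimension $d-1$ and degree $\deg B>h_1+1$) by $d-2$ of the hyperplanes $\{\theta_i=0\}$ gives, by Bertini's theorem, an integral non-degenerate curve $C\subseteq\PP^{h_1+1}$ of degree $\deg B$ (for $d=2$ there is nothing to cut, $C=\Proj B$); cutting $C$ by the remaining hyperplane gives, by the Uniform Position Theorem (Theorem~\ref{UPL}), a reduced set $Y$ of $\deg B$ points in uniform position in $\PP^{h_1}$, and $Y=\Proj(B/\mathfrak aB)$. Since $\deg B>h_1+1$, Lemma~\ref{uniform level} gives $\beta^{\bar S}_{h_1,h_1+1}(R_Y)=0$, where $R_Y=\bar S/I_Y$ is the homogeneous coordinate ring of $Y$.

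The heart of the matter is the identification $\bar B=R_Y$. The surjection $B/\mathfrak aB\twoheadrightarrow\bar B$, $b\mapsto b\cdot 1_{\bar A}$, has kernel $\operatorname{ann}_{B/\mathfrak aB}(\bar A)$. Since $A$ is Cohen--Macaulay it is torsion-free over the domain $B$, so for each one-dimensional (minimal) prime $\mathfrak q$ of $B/\mathfrak aB$ one has $B_\mathfrak q\subseteq A_\mathfrak q$, hence $\bar A_\mathfrak q=A_\mathfrak q/\mathfrak aA_\mathfrak q\ne 0$ by Nakayama; as $(B/\mathfrak aB)_\mathfrak q$ is a field (because $\Proj(B/\mathfrak aB)=Y$ is reduced), $\operatorname{ann}_{B/\mathfrak aB}(\bar A)$ localises to $0$ at $\mathfrak q$. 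This holding at every minimal prime, $\operatorname{ann}_{B/\mathfrak aB}(\bar A)$ is $\mm$-torsion; and since $\bar B\subseteq\bar A$ with $\bar A$ Cohen--Macaulay of dimension $1$ we get $\operatorname{depth}\bar B\ge 1$, i.e. $H^0_\mm(\bar B)=0$. Combining, the kernel equals $H^0_\mm(B/\mathfrak aB)$, so $\bar B=(B/\mathfrak aB)^\sat=R_Y$. Finally apply $\Tor^{\bar S}_\bullet(\kk,-)$ to the exact sequence $0\to\bar B\to\bar A\to T\to 0$ with $T=\bar A/\bar B$: since $\bar B=\kk[\bar A_1]\supseteq\bar A_1$ we have $T_0=T_1=0$, so $T$ is generated in degrees $\ge 2$ and $\beta^{\bar S}_{i,j}(T)=0$ whenever $j\le i+1$; together with $\beta^{\bar S}_{h_1,h_1+1}(\bar B)=\beta^{\bar S}_{h_1,h_1+1}(R_Y)=0$, the long exact sequence forces $\beta^{\bar S}_{h_1,h_1+1}(\bar A)=0$. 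Hence $\bar A$, and so $A$, is level.

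The delicate step, I expect, is the identification $\bar B=R_Y$: one must verify that the general linear section loses no point of $\Proj B$ and that $\ker(B/\mathfrak aB\to\bar B)$ is no larger than the $\mm$-torsion, and this is precisely where the Cohen--Macaulayness of $A$ (forcing torsion-freeness over $B$) and the reducedness of the section are used. One should also confirm that a single generic choice of $\theta_1,\dots,\theta_{d-1}$ can be made to serve all the above roles at once.
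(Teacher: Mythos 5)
Your argument is correct and is essentially the paper's own proof: reduce by general linear sections (Bertini together with the Uniform Position Theorem~\ref{UPL}) to a set of points in uniform position with more than $h_1+1$ elements, apply Lemma~\ref{uniform level} to its coordinate ring, and transfer the vanishing $\beta_{c,c+1}=0$ to the ring itself via the short exact sequence $0\to\bar B\to\bar A\to T\to 0$ (whose cokernel lives in degrees $\ge 2$) and Lemma~\ref{level by Betti}. The only point of divergence is the step you flagged as delicate: the paper never identifies $\kk[\bar A_1]$ with the full section ring --- it merely observes that $A_1=B_1$ forces the Hilbert functions of $\kk[\bar A_1]$ and of the section to agree in degrees $\le 2$, and uses that a subset of a uniform-position set with at least $h_1+1$ points is again in uniform position --- whereas your torsion-freeness/annihilator argument (using Cohen--Macaulayness of $A$ and reducedness of the general section) proves the stronger equality $\bar B=R_Y$, which is a valid substitute; note only that your passing claim that the $\theta_i$ form a regular sequence on $B$ is not needed anywhere and is not automatic unless $B$ is Cohen--Macaulay, but this is harmless.
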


\begin{proof}
First, we remark that if $A$ is not level then $\dim B\ge 2$. In fact, if $\dim B=1$, then $B$ is a polynomial ring (since $B$ is a standard graded domain now) and $\deg B =1$. This contradicts the assumption that $\deg B > h_1 +1$. 
 
We use the same notation as the previous section. So $S$ is the standard graded polynomial ring with $S_1 = A_1 (=B_1)$. 
By Bertini's theorem, if $\dim B \ge 3$, then there is some $x \in B_1 =S_1$ such that $\tilde{B}:=(B/xB)^\sat$ is a domain with $\dim \tilde{B}=\dim B-1$. 
Then $x$ is a non-zero divisor of $A$, and  $A':=A/xA$ is  a Cohen--Macaulay semi-standard graded ring with the $h$-vector $(h_0, h_1, h_2)$. Moreover,  $B':=B/(xA \cap B)$ is the subalgebra of $A'$ generated by 
its degree 1 part.  Since $H_\mm^0(B') \subset H_\mm^0(A') =0$, $B'$ is a quotient ring of $\tilde{B}$. However, $\tilde{B}$ is a domain with $\dim B'=\dim A'=\dim \tilde{B}$, hence we have 
$B'=\tilde{B}$.  In particular, $\deg \tilde{B}=\deg B$. 
Since  $A'$ is level if and only if so is $A$,  we can reduce the statement on $A$ and $B$ to that on $A'$ and $B'$. 

Repeating the above argument, we may assume that $\dim A=2$ (i.e, $\Proj B$ is a curve).  
In this case, there is some $x \in B_1 =S_1$ such that $\tilde{B}:=(B/xB)^\sat$ is  the homogeneous coordinate ring of a finite set of points 
in uniform position by Theorem~\ref{UPL}.  Let $B':= B/(xA \cap B)$ be the subalgebra of $A':=A/xA$ generated by its degree 1 part. 
Since $H_\mm^0(B') \subset H_\mm^0(A')  =0$, $B'$  is a quotient ring of $\tilde{B}$.  
Moreover, since $A_1 =B_1$, we have $[(xA \cap B)/xB]_i =0$ for all $i \le 2$, and hence $\tilde{B}_i = (B/xB)_i = B'_i$  for all $i \le 2$.

Set $S':=S/xS$ and $c:= \dim S'-1 =h_1$. For $X:=\Proj \tilde{B}$ and $Y:= \Proj B'$, we have $Y \subset X \subset \PP^c$. 
Since $\tilde{B}_i =B'_i$ for $i \le 2$, we have $\# Y \ge H_Y(2) = H_X(2)> c+1$. Since $Y$ is in uniform position in $\PP^c$, we have $\beta_{c,c+1}^{S'}(B')=0$ by Lemma~\ref{uniform level}. 

Now let us prove that $A'$ is level.    
By Lemma~\ref{level by Betti}, it suffices to show that  $\beta_{c,c+1}^{S'}(A') = 0$. Consider the exact sequence 
\begin{equation}\label{A'B'C'}
0 \to B' \to A' \to C' \to 0
\end{equation} 
of $S'$-modules.  Since $C'_i=0$ for $i \le 1$,  we have $\beta_{i,j}^{S'}(C')=0$ for all $i,j$ with $j \le i+1$. 
Applying $\Tor_\bullet^{S'}(\kk,-)$ to \eqref{A'B'C'}, it follows that 
$\beta_{i,i+1}^{S'}(A')=\beta_{i,i+1}^{S'}(B')$ for all $i$. Since $\beta_{c,c+1}^{S'}(B')=0$ as we showed above, we have 
$\beta_{c,c+1}^{S'}(A')=0$. It means that $A'$ is level, and so is $A$ itself.  
 \end{proof}

\begin{theorem}\label{main1}
Let $A$ be a Cohen--Macaulay semi-standard graded ring with the $h$-vector $(h_0,h_1,h_2)$.  If $A$ is not level and $B=\kk[A_1]$ is a domain, then $h_1+1$ divides $h_2$.  
\end{theorem}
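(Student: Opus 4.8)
The plan is to follow the outline sketched in the introduction: reduce to the case $\dim A = 2$, identify $B$ as a Veronese subring, and then analyze $A$ as a maximal Cohen--Macaulay module over $B$. First I would dispose of trivial cases: if $h_1 = 0$ then Lemma~\ref{h1=0} says $A$ is level, contradiction, so $h_1 \ne 0$; and by Proposition~\ref{deg B > c+1} we may assume $\deg B = h_1 + 1$, i.e., $B$ attains the minimal possible degree for a standard graded domain of codimension $h_1$. Then, as in the proof of Proposition~\ref{deg B > c+1}, I would apply Bertini's theorem repeatedly to pass to a hyperplane section, reducing to $\dim A = \dim B = 2$ while preserving the $h$-vector of $A$, the non-levelness of $A$, and the degree of $B$. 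At this point $\Proj B$ is a curve of minimal degree $h_1+1$ in $\PP^{h_1+1}$, which by Del~Pezzo--Bertini (cited at the end of \S2) is a rational normal curve; hence $B \cong \kk[x^n, x^{n-1}y, \ldots, y^n]$ with $n = h_1+1$, the $n$-th Veronese subring of $\kk[x,y]$.

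Next I would exploit the structure theory of maximal Cohen--Macaulay (MCM) modules over the rational normal curve cone $B \cong \kk[x,y]^{(n)}$. Since $A$ is Cohen--Macaulay of dimension $2$ over the $2$-dimensional ring $B$ and $B \hookrightarrow A$ is finite, $A$ is an MCM $B$-module. The key classical fact is that $B$ has finite CM representation type: its indecomposable MCM modules (up to shift) are exactly the modules $M_j$ corresponding to the summands $(\kk[x,y])^{(n)}_{(j)} := \bigoplus_{i} \kk[x,y]_{ni+j}$ for $j = 0, 1, \ldots, n-1$, that is, the "Veronese pieces" of $\kk[x,y]$ viewed as a graded $B$-module. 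So as a graded $B$-module, $A \cong \bigoplus_{k} M_{j_k}(a_k)$ for suitable shifts $a_k$. The point is then to pin down which summands and shifts can occur, using the following constraints: (i) $A_0 = \kk$, so exactly one summand contributes in degree $0$, forcing that summand to be $B = M_0$ itself (up to the canonical embedding $B \subset A$); (ii) $A$ is generated as a $B$-module in degrees $0$ and $1$ (since $A$ is semi-standard graded with $h$-vector of length $2$, one checks $A = B \cdot A_1 \cdot A_1$-type bounds, and $\reg A = 2$); (iii) the $h$-vector of $A$ is $(1, h_1, h_2)$, which is determined by the Hilbert series $\sum \dim_\kk A_k \, t^k = \sum_k (\text{Hilbert series of } M_{j_k}(a_k))$. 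Since each $M_j$ has Hilbert series $\sum_{i \ge 0} (\text{number of monomials of degree } ni+j)\, t^i$ when regraded, hence of the form $(\text{polynomial})/(1-t)^2$, these Hilbert series are completely explicit, and I would compute that adding one extra summand of the form $M_j(-1)$ (a generator of $\omega$-type living in degree $1$) contributes exactly $h_1 + 1 = n$ to $h_2$ and $0$ to $h_1$.

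Concretely: non-levelness of $A$ should translate, via Lemma~\ref{level by Betti} and Lemma~\ref{CM type}, into $\omega_A$ having a minimal generator in degree $d - 1 = 1$ (in the reduced $\dim A = 2$ setting), equivalently into the $B$-module decomposition of $A$ containing at least one "extra" summand beyond those forced by $A_1$; and each such extra summand is a shifted Veronese piece whose Hilbert function contributes a multiple of $n = h_1+1$ to the second difference $h_2 = H_A(2) - H_A(1)$ while contributing nothing to $h_1 = H_A(1) - H_A(0)$ (because $A_1 = B_1$ is already accounted for by the $B$-module generators in degrees $\le 1$, and a summand $M_j(-1)$ placed to avoid enlarging $A_1$ first shows up in degree $2$ with its full rank). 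Summing over all such summands gives $h_2 = (h_1+1) \cdot n$ for some positive integer $n$, which is exactly the divisibility claim.

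The main obstacle I expect is Step~two: cleanly controlling the graded $B$-module decomposition of $A$, in particular ruling out summands with shifts that would either violate $A_0 = \kk$, break the domain property, or produce an $h$-vector of length $\ne 2$, and carefully matching the combinatorics of the Veronese-piece Hilbert functions to the second difference $h_2$. One must be careful that the decomposition is a decomposition of \emph{graded} $B$-modules and that the shifts are constrained by $\reg_B A$ (or equivalently by the fact that $A$ is generated over $B$ in degrees $0$ and $1$ only, which itself needs justification from $\reg_S A = 2$ together with $B$ being generated in degree $1$). Once the admissible decomposition is nailed down, the divisibility $h_1 + 1 \mid h_2$ falls out of a direct Hilbert-series bookkeeping, and moreover the quotient $n = h_2/(h_1+1)$ is visibly a nonnegative integer that is positive precisely when $A$ fails to be level --- which also foreshadows the converse construction in Theorem~\ref{thm:Ehr}.
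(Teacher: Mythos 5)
Your proposal follows essentially the same route as the paper: Lemma~\ref{h1=0} and Proposition~\ref{deg B > c+1} together with the Bertini reduction bring you to $\dim A=\dim B=2$ with $B$ the homogeneous coordinate ring of a rational normal curve, i.e.\ the $n$th Veronese subring of $\kk[x,y]$ with $n=h_1+1$, and the finite-CM-type decomposition of $A$ into shifted Veronese pieces then yields the divisibility, exactly as in the paper. The only point to tighten is the one you flag as your main obstacle, and it is where the paper's computation does the work: since the $h$-vector has length $2$ (so $h_3=0$, equivalently $\reg A=2$), every summand other than the copy of $B$ must be generated in degree $2$ with a $2$-linear resolution, and among the pieces $V(m)$ only $m=n-1$ qualifies, giving $C\cong V(n-1)^{\oplus l}$ and $h_2=nl=(h_1+1)l$; accordingly your parenthetical claim that $A$ is generated as a $B$-module ``in degrees $0$ and $1$'' should read ``in degrees $0$ and $2$,'' consistent with your own later statement that the extra summands first appear in degree $2$.
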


\begin{proof}
If $\dim B=1$, then $B$ is a polynomial ring and $h_1=0$. This contradicts the assumption that $A$ is not level by Lemma~\ref{h1=0}. 
Hence we have  $\dim B \ge 2$. By the argument using Bertini's theorem, 
we can reduce to the case  $\dim B = 2$ as in the proof of Proposition~\ref{deg B > c+1}. By the proposition, we have 
$\deg B = h_1 +1$, and hence $B$ is the homogeneous coordinate ring of a rational normal curve as we have remarked in the last of  the previous section. 
In other words, $B \cong \kk[x^n, x^{n-1}y, \ldots, xy^{n-1}, y^n]$, where $n+1=h_1 +2=\dim_\kk B_1$, that is, $B$ is isomorphic to the $n$th Veronese subring $T^{(n)}=\bigoplus_{i \in \NN} T_{ni}$ of the 2-dimensional polynomial ring $T= \kk[x,y]$.   
Let $S$ be the polynomial ring of $n+1$ variables, and regard $B$ as a quotient ring of $S$ as before. Then we have $\reg_S B=2$.  

%Consider the short exact sequence $0 \to B \to A \to C \to 0$ of graded $B$-modules.  
% Since $C_i=0$ for all $i \le 1$, we have $\beta^S_{n, i}(C)=0$ for all $i \le n+1$. 
%On the other hand, by the present assumption, $\beta^S_{n-1, i}(B)=0$ unless $i=n$. 
%Moreover, since $A$ is a 2-dimensional Cohen--Macaulay,  $\beta^S_{n, i}(A)=0$ for all $i$. 
%Hence we have  $\beta^S_{n, i}(C)=0$ for all $i $. It means that $C$ is a Cohen--Macaulay $B$-module of $\dim C=\dim B=2$. Since $\reg A=2$ and $\reg B=1$, we see that $C$ has a 2-linear resolution. 

Note that  $A$ is a 2-dimensional Cohen--Macaulay graded $B$-module.   Next we consider the direct sum decomposition of $A$ as a $B$-module. 
Identifying $B$ as the  $n$th Veronese subring  $T^{(n)}$ of $T= \kk[x,y]$,    
an indecomposable Cohen--Macaulay graded $B$-module of dimension 2 is 
isomorphic to 
$$V(m) := \bigoplus_{i \in \NN} T_{m + ni}$$
for some $0 \le m < n$ up to degree shift. 
This is a classical result, and  can be proved by a similar way to its ``local version" 
(\cite[Proposition~10.5]{Yo}), since $T^{(n)}$ is an invariant subring of $T$ by a cyclic group of the order $n$. 

By the argument using Hilbert functions, we see that $A \cong B \oplus C$ as $B$-modules, where $C$ is a  2-dimensional  Cohen--Macaulay module. We have $C_i=0$ for all $i \le 1$ and $\reg C=2$. In other words, $C$ has a 2-linear resolution.

For simplicity, we set the degree 2 part of $V(m)$ as a graded $B$-module (or $S$-module) to be  $T_m$. In other words, we use the convention that $V(m)$ is generated by its degree 2 part.  The Hilbert series of $V(m)$ is 
$$\sum_{i \in \NN} (\dim_\kk [V(m)]_i) \cdot t^i = \frac{(m+1)t^2+ (n-1-m)t^3}{(1-t)^2},$$
so $V(m)$ has a 2-linear resolution as a graded $S$-module if and inly if $m = n-1$. Hence we have $C \cong (V(n-1))^{\oplus l}$ for some $l \in \NN$, and the Hilbert series of $C$ is 
$$\dfrac{nl\cdot t^2}{(1-t)^2} =\dfrac{(h_1+1)l\cdot t^2}{(1-t)^2}.$$ 
On the other hand,  the Hilbert series of $B$ is  $(1+h_1 t)/(1-t)^2$, hence the Hilbert series of $A \, (\cong B \oplus C)$ is 
$$\dfrac{1+h_1 t+ (h_1+1)l\cdot t^2}{(1-t)^2},$$ 
and the $h$-vector of $A$ is $(1, h_1, (h_1 +1)l)$. 
\end{proof}

\begin{prop}
Let $A$ be a Cohen--Macaulay semi-standard graded ring with the $h$-vector $(h_0,h_1,h_2)$.  If $A$ is not level and $B$ is a domain, then the Cohen-Macaulay type $r(A)$ of $A$ is equal to $h_1+h_2 \, (=\deg A -1)$. 
\end{prop}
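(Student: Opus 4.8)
The plan is to reduce to the case $\dim A=2$ treated in the proof of Theorem~\ref{main1} and then read off $r(A)$ from the explicit $B$-module decomposition of $A$ found there. First I would run the same Bertini reduction as in the proofs of Proposition~\ref{deg B > c+1} and Theorem~\ref{main1}, replacing $A$ by $A':=A/xA$ for a suitable non-zerodivisor $x\in A_1$. Besides the properties used there ($A'$ is again a Cohen--Macaulay semi-standard graded ring with the same $h$-vector, $B':=\kk[A'_1]$ is again a domain, and $A'$ is not level), I need that the Cohen--Macaulay type is preserved: since $x$ is a non-zerodivisor on the maximal Cohen--Macaulay module $\omega_A$, one has $\omega_{A'}\cong\omega_A/x\omega_A$ up to a degree shift, whence graded Nakayama's lemma gives $r(A')=r(A)$. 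As $\deg A=h_0+h_1+h_2$ is determined by the $h$-vector, it is unchanged as well, so it suffices to treat the case $\dim A=2$.

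In that case I take over the notation and conclusions of the proof of Theorem~\ref{main1}: $B\cong T^{(n)}$ with $T=\kk[x,y]$ and $n=h_1+1$, the polynomial ring $S$ has $n+1=h_1+2$ variables, $c:=h_1=n-1>0$, and $A\cong B\oplus C$ as $B$-modules, where $C\cong V(n-1)^{\oplus l}$ for some $l\in\NN$ and $h_2=nl$. Because the $S$-module structure on $A$ factors through the surjection $S\twoheadrightarrow B$, this is simultaneously a direct sum decomposition of $S$-modules, so $\beta^S_c(A)=\beta^S_c(B)+l\,\beta^S_c(V(n-1))$. By Lemma~\ref{CM type} we have $r(A)=\beta^S_c(A)$, so it suffices to compute the last Betti numbers of $B$ and of $V(n-1)$ over $S$.

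For $B$, the minimal free resolution over $S$ is the Eagon--Northcott complex of the $2\times n$ matrix cutting out the rational normal curve of degree $n$, so $\beta^S_c(B)=\beta^S_{n-1,n}(B)=n-1$. For $V(n-1)$, the proof of Theorem~\ref{main1} records that the resolution over $S$ is $2$-linear, and its length equals $c$ since $V(n-1)$ is Cohen--Macaulay of codimension $c=n-1$; the Betti numbers of a linear resolution being forced by the Hilbert series, from $H_{V(n-1)}(t)=nt^2/(1-t)^2$ one gets $\sum_i(-1)^i\beta^S_{i,i+2}(V(n-1))\,t^i=n(1-t)^{n-1}$, hence $\beta^S_{i,i+2}(V(n-1))=n\binom{n-1}{i}$ and in particular $\beta^S_c(V(n-1))=n$. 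Therefore
$$r(A)=\beta^S_c(A)=(n-1)+ln=h_1+h_2=\deg A-1 .$$

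The proposition is thus essentially a computation resting on Theorem~\ref{main1}; the two points that demand care are the preservation of $r(A)$ under the Bertini reduction and the evaluation $\beta^S_c(V(n-1))=n$. For the latter, an alternative to the Hilbert-series computation is to note that $\omega_B\cong V(n-2)$ and, via $\operatorname{Cl}(B)\cong\ZZ/n$, $\omega_{V(n-1)}\cong V(n-1)$ up to degree shifts, so that $\beta^S_c(V(n-1))=\mu(\omega_{V(n-1)})=\mu(V(n-1))=n$ directly.
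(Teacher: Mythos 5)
Your proposal is correct and follows essentially the same route as the paper: reduce via Bertini to the $\dim A=2$ situation of Theorem~\ref{main1}, apply Lemma~\ref{CM type} to get $r(A)=\beta_c(A)=\beta_c(B)+\beta_c(C)$, and compute the two last Betti numbers. The paper dismisses those computations as ``easy calculation'' ($\beta_c(B)=h_1$, $\beta_c(C)=h_2$) and leaves the preservation of the Cohen--Macaulay type under the reduction implicit, whereas you spell out both (Eagon--Northcott for $B$, the linear resolution and Hilbert series for $V(n-1)^{\oplus l}$, and $\omega_{A'}\cong\omega_A/x\omega_A$ with Nakayama), all of which is accurate.
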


Recall that $\deg A-1$ is the largest possible value of the Cohen-Macaulay type of $A$ in general. 

\begin{proof}
Since $A$ is not level, we have $c:=h_1 >0$ and we can use Lemma~\ref{CM type}. With the same notation as in the proof of Theorem~\ref{main1}, we have
$$r(A)=\beta_c(A)=\beta_c(B)+\beta_c(C).$$
However, easy calculation shows  that $\beta_c(B)=h_1$ and $\beta_c(C)=h_2$. So we are done. 
\end{proof}

\section{Semi-standard graded normal affine semigroup rings and non-level Ehrhart rings}

In this section, we recall some notions for affine semigroup rings (toric rings) 
and we discuss semi-standard graded normal affine semigroup rings. 
We also introduce Ehrhart rings which are some kind of normal affine semigroup rings arising from lattice polytopes. 
It will be proved that every semi-standard graded normal affine semigroup ring can be always 
viewed as the Ehrhart ring of some lattice polytope. 
Finally, we will show the examples of non-level Ehrhart rings 
whose $h$-vectors are of the form $(1,h,n(h+1))$ (Theorem \ref{thm:Ehr}).

For $A \subset \RR^d$, let $\RR_{\geq 0}A$ denote the cone generated by $A$, i.e., 
$$\RR_{\geq 0}A=\left\{\sum_{v \in A} r_v v \in \RR^d : r_v \in \RR_{\geq 0}\right\}.$$ 
For $B \subset \ZZ^d$, let $\gp(B)$ denote the group (the lattice) generated by $B$, i.e., 
$$\gp(B)=\left\{\sum_{v \in B} z_v v \in \ZZ^d : z_v \in \ZZ\right\}.$$

Let $C \subset \ZZ^d$ be an affine semigroup. 
\begin{itemize}
\item We say that $C$ is {\em pointed} if $C$ contains no vector subspace of positive dimension. 
\item Let $\overline{C}=\RR_{\geq 0}C \cap \gp(C)$. We say that $C$ is {\em normal} if $C=\overline{C}$. 
\item Let $\kk[C]$ be the affine semigroup ring of $C$, i.e., 
$$\kk[C]:=\kk[{\bf X}^\alpha : \alpha=(\alpha_1,\ldots,\alpha_d) \in C],$$ 
where ${\bf X}^\alpha=\prod_{i=1}^d X_i^{\alpha_i}$ denotes a Laurent monomial. 
Note that $\kk[C]$ is positively graded if and only if $C$ is pointed, and $\kk[C]$ is normal if and only if $C$ is normal. 
\end{itemize}

When $\kk[C]$ is $\ZZ$-graded, by abuse of notation, 
we write $\deg(\alpha)=n$ if $\deg({\bf X}^\alpha)=n$ for $\alpha \in C$.

We also recall what the Ehrhart ring of a lattice polytope is. 
Let $P \subset \RR^d$ be a lattice polytope, which is a convex polytope all of whose vertices belong to 
the standard lattice $\ZZ^d$, of dimension $d$. We define the $\kk$-algebra $\kk[P]$ as follows: 
\begin{align*}
\kk[P]=\kk[ {\bf X}^\alpha Z^n : \alpha \in nP \cap \ZZ^d, \; n \in \NN], 
\end{align*}
where for $\alpha=(\alpha_1,\ldots,\alpha_d) \in \ZZ^d$, ${\bf X}^\alpha Z^n=X_1^{\alpha_1} \cdots X_d^{\alpha_d} Z^n$ 
denotes a Laurent monomial in $\kk[X_1^\pm, \ldots,X_d^\pm, Z]$ and $nP=\{nv : v \in P\}$. 
It is known that $\kk[P]$ is a semi-standard graded normal Cohen--Macaulay domain of dimension $d+1$, 
where the grading is defined by $\deg ({\bf X}^\alpha Z^n) =n$ for $\alpha \in nP \cap \ZZ^d$. 
The graded $\kk$-algebra $\kk[P]$ is called the {\em Ehrhart ring} of $P$. 

\begin{rem}\label{chuui}
Let $C \subset \ZZ^d$ be a pointed normal affine semigroup and assume that $\kk[C]$ is $\ZZ$-graded. 
Let $C_1$ be the set of all degree one elements of $C$. 
If $C$ satisfies $C=\RR_{\geq 0}C_1 \cap \gp(C)$, then there exists a lattice polytope $P$ 
such that $\kk[C]$ is isomorphic to the Ehrhart ring of $P$ as $\kk$-algebras. 
In fact, let $Q$ be the convex hull of $C_1$. Then we can easily see that $\kk[C] \cong \kk[Q]$. 
\end{rem}

Actually, semi-standard graded normal affine semigroup rings are isomorphic to Ehrhart rings of lattice polytopes. 
\begin{prop}\label{key}
Let $C$ be a normal affine semigroup. Assume that $\kk[C]$ is semi-standard graded. 
Then there exists a lattice polytope $P$ such that $\kk[C]$ is isomorphic to the Ehrhart ring of $P$ as $\kk$-algebras. 
\end{prop}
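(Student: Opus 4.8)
The plan is to reduce Proposition~\ref{key} to the situation described in Remark~\ref{chuui}, so that the polytope we want is simply the convex hull of the degree-one elements of a suitably re-coordinatized copy of $C$. Concretely, write $C \subset \ZZ^d$ with $\kk[C]$ semi-standard graded. Since $\kk[C]$ is a positively graded $\kk$-algebra, $C$ is pointed, and since $\kk[C]$ is semi-standard graded, $\kk[C]$ is a finitely generated module over $\kk[C_1]$, where $C_1 \subset C$ is the set of degree-one elements. The content of the proposition is therefore to realize $C$ (up to semigroup isomorphism) as a semigroup $C'$ sitting inside some $\ZZ^{e}$ in such a way that $C' = \RR_{\geq 0}C'_1 \cap \gp(C')$; once this is done, Remark~\ref{chuui} applies verbatim and produces the desired lattice polytope $P$ (the convex hull of $C'_1$), with $\kk[C] \cong \kk[C'] \cong \kk[P]$.

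The key steps, in order, are as follows. First, I would replace $\ZZ^d$ by $L := \gp(C)$, so that without loss of generality $\gp(C) = \ZZ^e$ for $e = \operatorname{rank} \gp(C)$; semigroup rings only depend on the abstract semigroup, so this is harmless. Second, because $\kk[C]$ is $\ZZ$-graded there is a group homomorphism $\deg : \gp(C) = \ZZ^e \to \ZZ$ with $\deg(c) \geq 1$ for every nonzero $c \in C$ (pointedness gives $\deg(c) > 0$, and since $\deg$ takes integer values on the finitely many generators, after clearing a common factor we may assume $\deg$ is surjective and the generators all have positive integer degree; in fact semi-standard graded forces $C_1$ to generate $\gp(C)$, so $\deg$ is already surjective). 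Third — the heart of the matter — I would show that normality of $C$ plus the semi-standard hypothesis forces $C = \RR_{\geq 0} C_1 \cap \gp(C)$. The inclusion $\subseteq$ is immediate ($C$ normal means $C = \RR_{\geq 0} C \cap \gp(C)$ and $\RR_{\geq 0} C_1 \subseteq \RR_{\geq 0} C$; for the reverse, note that every generator $g$ of $C$ lies in degree $\deg(g) =: n_g$, and finite generation of $\kk[C]$ over $\kk[C_1]$ means that for $N \gg 0$ every degree-$N$ element of $C$ is a sum of elements of $C_1$, hence the cone $\RR_{\geq 0}C$ is actually the cone $\RR_{\geq 0}C_1$). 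Then any $v \in \RR_{\geq 0}C_1 \cap \gp(C) = \RR_{\geq 0}C \cap \gp(C) = C$, giving equality. Fourth, apply Remark~\ref{chuui} to $C$ (now living in $\ZZ^e$ with its surjective degree map) to obtain the lattice polytope $P = \operatorname{conv}(C_1)$ with $\kk[C] \cong \kk[P]$.

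The main obstacle I anticipate is Step three: carefully establishing that $\RR_{\geq 0}C = \RR_{\geq 0}C_1$ from the module-finiteness of $\kk[C]$ over $\kk[C_1]$. The subtlety is that a generator $g$ of $C$ might have degree $n_g \geq 2$, so $g$ itself need not be a nonnegative combination of elements of $C_1$ inside the \emph{semigroup}; but module-finiteness says $\mathbf{X}^g$ times a high enough power of some degree-one monomial lands in $\kk[C_1]$, i.e. $g + (\text{sum of many elements of } C_1) \in \NN C_1$, and this does force $g \in \RR_{\geq 0}C_1$ after dividing by a positive scalar. One has to phrase this at the level of rational cones rather than semigroups, and check that taking $\RR_{\geq 0}$ of both sides of an equality of saturations behaves well; I would isolate this as a short lemma. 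A secondary, more bookkeeping-level point is making sure the degree map is genuinely surjective onto $\ZZ$ (not just onto $d\ZZ$ for some $d > 1$) — but semi-standard gradedness, which says $C_1$ generates $\kk[C]$ up to a finite module, already implies $C_1$ generates $\gp(C)$ as a group, hence $\deg(\gp(C)) = \deg(\ZZ C_1) = \ZZ$, so this causes no trouble. Everything else is routine.
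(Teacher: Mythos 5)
Your overall strategy is the same as the paper's: reduce to Remark~\ref{chuui} by proving $C=\RR_{\geq 0}C_1\cap\gp(C)$, where normality gives $\RR_{\geq 0}C_1\cap\gp(C)\subseteq C$ and semi-standardness must supply the inclusion $\RR_{\geq 0}C\subseteq\RR_{\geq 0}C_1$. The problem is that your justification of this last inclusion --- which you correctly identify as the heart of the matter --- rests on two false readings of module-finiteness. Finite generation of $\kk[C]$ over $\kk[C_1]$ does \emph{not} imply that for $N\gg 0$ every degree-$N$ element of $C$ is a sum of elements of $C_1$, nor that for a semigroup generator $g$ one has $g+(\text{sum of elements of }C_1)\in\NN C_1$; it only gives $C\subseteq\bigcup_{i=1}^{r}\bigl(g_i+\NN C_1\bigr)$ for finitely many $g_i\in C$, and the $g_i$-part need never disappear. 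A concrete counterexample in exactly the setting of the proposition: let $\Delta=\con\{(0,0,0),(1,1,0),(1,0,1),(0,1,1)\}$ and let $C\subset\ZZ^4$ be the semigroup with $\kk[C]=\kk[\Delta]$, the Ehrhart ring (normal, semi-standard). Every element of $C_1$ has even coordinate sum in the first three coordinates, while $g=((1,1,1),2)\in C$ has odd sum; hence $g+\NN C_1$ never meets $\NN C_1$, and in every degree $N\geq 2$ the element $((1,1,1),N)\in C$ is not a sum of degree-one elements. So both mechanisms you propose fail, even though the desired conclusion $\RR_{\geq 0}C=\RR_{\geq 0}C_1$ holds there. (The same example refutes your side claim that semi-standardness forces $C_1$ to generate $\gp(C)$: here $\gp(C_1)$ has index $2$ in $\gp(C)$.)

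What is missing is an argument that works asymptotically or facewise rather than elementwise. The paper argues via the one-dimensional faces of $\RR_{\geq 0}C$: if a ray spanned by some $\alpha\in C$ avoided $\RR_{\geq 0}C_1$, then every ${\bf X}^{m\alpha}$, $m\in\ZZ_{>0}$, would have to occur among the module generators of $\kk[C]$ over $\kk[C_1]$, contradicting finite generation; since the pointed cone $\RR_{\geq 0}C$ is generated by such rays, $\RR_{\geq 0}C\subseteq\RR_{\geq 0}C_1$ follows. Alternatively, from $m\alpha\in g_{i(m)}+\RR_{\geq 0}C_1$ choose an index $i$ recurring for infinitely many $m$, divide by $m$, and use that the finitely generated cone $\RR_{\geq 0}C_1$ is closed to conclude $\alpha\in\RR_{\geq 0}C_1$. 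Either repair is short, but as written your Step three does not go through, and it is precisely the step the proposition turns on.
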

\begin{proof}
We see that $C$ is pointed since $\kk[C]$ is positively graded. 
Let $C_1$ be the set of all degree one elements of $C$. 
The normality of $C$ implies that $C$ contains all lattice points contained in $\RR_{\geq 0}C_1$, 
where ``lattice points'' stand for the points in $\gp(C)$. Thus, $\RR_{\geq 0}C_1 \cap \gp(C) \subset \overline{C} = C$. 
It suffices to show the equality $C=\RR_{\geq 0}C_1 \cap \gp(C)$ (see Remark \ref{chuui}), 
in particular, $\RR_{\geq 0}C \subset \RR_{\geq 0}C_1$.  

To prove this, it is enough to prove that all $1$-dimensional cones in $\RR_{\geq 0}C$ lie in $\RR_{\geq 0}C_1$. 
If this were wrong, then there is a lattice point $\alpha \in C \cap \gp(C)$ 
such that $\RR_{\geq 0}\{\alpha\} \subset \RR_{\geq 0}C \setminus \RR_{\geq 0}C_1$. 
Then $m\alpha \in (\RR_{\geq 0}C \setminus \RR_{\geq 0}C_1) \cap \gp(C)$ for any $m \in \ZZ_{>0}$. 
This implies that for any positive integer $m$, ${\bf X}^{m\alpha}$ will be a generator of $\kk[C]$ as a $\kk[C_1]$-module, 
where $\kk[C_1]$ denotes a subalgebra of $\kk[C]$ generated by its degree one elements. 
This contradicts the hypothesis that $\kk[C]$ is finitely generated as $\kk[C_1]$-module, i.e., $\kk[C]$ is semi-standard. 

Therefore, $C=\RR_{\geq 0}C_1 \cap \gp(C)$. This says that $\kk[C]$ is isomorphic to the Ehrhart ring of some lattice polytope. 
%(to be precise, choose a linear form which is negative on c, but 
%positive on the cone generated by P. Then multiples of c will be more 
%and more negative, but with k[P] we can only reach elements "in the 
%positive direction" of the module generators). 
\end{proof}

In the context of enumerative combinatorics on lattice polytopes, 
the $h$-vector of the Ehrhart ring $\kk[P]$ of $P$ is often called the {\em $h^*$-vector} (or the {\em $\delta$-vector}) of $P$. 
It is known that the $a$-invariant of the Ehrhart ring of $P$ can be computed as follows: 
\begin{align*}\label{socle}
a(\kk[P])=-\min\{ \ell \in \ZZ_{> 0} : \ell P^\circ \cap \ZZ^d \not= \emptyset\}, 
\end{align*}
where $P^\circ$ denotes the interior of $P$. Note that $s=d+1+a(\kk[P])$ holds 
when the $h^*$-vector of $P$ is $(h_0^*,h_1^*, \ldots,h_s^*)$. 

We can discuss whether $\kk[P]$ is level in terms of $P$ as follows. 
\begin{prop}\label{prop:criterion}
Let $P \subset \RR^d$ be a lattice polytope of dimension $d$. 
%{\em (a)} $\kk[P]$ is standard graded if and only if for each $n \geq 1$ and for each $\alpha \in nP \cap \ZZ^d$, 
%there exist $\alpha_1,\ldots,\alpha_n \in P \cap \ZZ^d$ such that $\alpha=\alpha_1+\cdots+\alpha_n$. \\
Then $\kk[P]$ is level if and only if for each $n \geq -a(\kk[P])$ and for each $\alpha \in nP^\circ \cap \ZZ^d$, 
there exist $\alpha_1,\ldots,\alpha_{n+a(\kk[P])} \in P \cap \ZZ^d$ and $\beta \in (-a(\kk[P]))P^\circ \cap \ZZ^d$ 
such that $\alpha=\alpha_1+\cdots+\alpha_{n+a(\kk[P])}+\beta$. 
\end{prop}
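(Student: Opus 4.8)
The plan is to make the canonical module of $\kk[P]$ completely explicit, recast ``level'' as the statement that $\omega_{\kk[P]}$ is generated in its lowest degree, and then read this off from the lattice points of the dilates of $P$. Throughout write $A=\kk[P]$, put $a=a(\kk[P])<0$, and let $B=\kk[A_1]$ be the subalgebra generated by the degree-one component of $A$ (so $A$ is a finitely generated $B$-module).

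First I would recall the Danilov--Stanley description of the canonical module of a normal affine semigroup ring (see \cite{BH}): $\omega_A$ is the $A$-submodule of the total ring of fractions which is $\kk$-spanned by the monomials ${\bf X}^\gamma Z^n$ with $n\ge 1$ and $\gamma\in nP^\circ\cap\ZZ^d$. In particular $\dim_\kk (\omega_A)_n=\#\bigl(nP^\circ\cap\ZZ^d\bigr)$, so $(\omega_A)_n\ne 0$ exactly when $n\ge -a$, and the lowest degree in which $\omega_A$ lives is $-a$, which equals $\dim A-s$ (using $s=\dim A+a$). By the definition of a level ring and graded Nakayama, $A$ is level if and only if $\omega_A$ is generated over $A$ by $(\omega_A)_{-a}$; equivalently, $(\omega_A)_n=A_{n+a}\cdot(\omega_A)_{-a}$ for every integer $n\ge -a$ (the case $n=-a$ being automatic).

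Next I would translate this identity monomially. Since $A$ is the semigroup ring of the normal semigroup $\{(\gamma,k) : k\ge 0,\ \gamma\in kP\cap\ZZ^d\}$, the graded piece $A_{n+a}$ is $\kk$-spanned by the monomials ${\bf X}^\gamma Z^{n+a}$ with $\gamma\in (n+a)P\cap\ZZ^d$, and $(\omega_A)_{-a}$ by the ${\bf X}^\beta Z^{-a}$ with $\beta\in(-a)P^\circ\cap\ZZ^d$. A product of two monomials is a monomial and distinct monomials are $\kk$-linearly independent, so $(\omega_A)_n=A_{n+a}\cdot(\omega_A)_{-a}$ holds precisely when every $\alpha\in nP^\circ\cap\ZZ^d$ can be written $\alpha=\gamma+\beta$ with $\gamma\in(n+a)P\cap\ZZ^d$ and $\beta\in(-a)P^\circ\cap\ZZ^d$. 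If one may take $\gamma=\alpha_1+\cdots+\alpha_{n+a}$ with each $\alpha_i\in P\cap\ZZ^d$, then certainly $\gamma\in(n+a)P\cap\ZZ^d$; hence the decomposition appearing in the proposition forces the displayed equalities for all $n$, and therefore forces $A$ to be level. This already gives the ``if'' direction.

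The ``only if'' direction is the one I expect to require real work, and it is where I would locate the main obstacle. Assuming $A$ is level, so that $\omega_A=A\cdot(\omega_A)_{-a}$, one must show that in fact $\omega_A=B\cdot(\omega_A)_{-a}$, i.e. $A_{n+a}\cdot(\omega_A)_{-a}=(A_1)^{n+a}\cdot(\omega_A)_{-a}$ for all $n$ --- in combinatorial terms, that the degree-$(n+a)$ factor $\gamma$ found above can always be replaced by an honest sum of $n+a$ lattice points of $P$ (after possibly readjusting $\beta$). The obstruction is that $(n+a)P\cap\ZZ^d$ can be strictly larger than the $(n+a)$-fold sumset of $P\cap\ZZ^d$ when $P$ fails the integer decomposition property, so $\gamma$ need not split on its own. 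The natural line of attack is a ``peeling'' argument: given $\alpha\in nP^\circ\cap\ZZ^d$, peel off lattice points of $P$ one at a time, $\alpha=v_1+\alpha'$, $\alpha'=v_2+\alpha''$, $\dots$, arranging that each successive remainder stays in the open dilate $jP^\circ\cap\ZZ^d$ down to level $-a$, where the level hypothesis has to be used to keep the remainder ``reachable''; after $n+a$ steps the remainder is a lattice point of $(-a)P^\circ$, giving the desired form. Establishing the single peeling step --- choosing $v$ (a vertex of $P$ being the natural candidate) so that $\tfrac{\alpha-v}{j-1}=\tfrac{j}{j-1}\cdot\tfrac{\alpha}{j}-\tfrac{1}{j-1}\,v$ still lands strictly inside $P$, and ruling out the configurations in which the interior lattice point sits too close to the boundary for any admissible $v$ --- is the crux of the argument, and it is here that both the normality of $A$ and the level hypothesis must genuinely enter.
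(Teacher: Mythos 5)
Your setup and your ``if'' direction are fine: the Danilov--Stanley description of $\omega_{\kk[P]}$ as the span of the monomials coming from interior lattice points of the dilates, together with graded Nakayama, reduces everything to the monomial identities you write down, and the displayed decomposition does imply $(\omega_A)_n=A_{n+a}\cdot(\omega_A)_{-a}$ for all $n$, hence levelness. (The paper states this proposition without proof, so there is no argument of the authors to compare against.) But your proposal is not a proof of the proposition: for the ``only if'' direction you only sketch a peeling strategy and you yourself flag its key step as unestablished, so as written there is a genuine gap.

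Moreover, that gap cannot be closed, because the implication you are trying to prove is false as literally stated. As you correctly observe, the displayed condition says exactly that $\omega_A$ is generated in its initial degree over $B=\kk[A_1]$, which is strictly stronger than levelness (generation over $A$) when $P$ fails the integer decomposition property. Concretely, take $P=\con\{(0,0,0),(1,1,0),(1,0,1),(0,1,1)\}\subset\RR^3$. Then $P\cap\ZZ^3$ consists of the four vertices (each with even coordinate sum), $a(\kk[P])=-2$, and $2P^\circ\cap\ZZ^3=\{(1,1,1)\}$; since the facet inequalities $x+y+z\le 2w$, $x\le y+z$, $y\le x+z$, $z\le x+y$ of the cone over $P$ are integral, subtracting the degree-two interior point $(1,1,1)$ from any interior lattice point of any dilate lands back in the cone, so $\omega_{\kk[P]}$ is the principal ideal generated by the monomial corresponding to $((1,1,1),2)$ and $\kk[P]$ is Gorenstein, hence level. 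Yet $\alpha=(2,2,2)\in 4P^\circ\cap\ZZ^3$ admits no decomposition $\alpha=\alpha_1+\alpha_2+\beta$ with $\alpha_i\in P\cap\ZZ^3$ and $\beta\in 2P^\circ\cap\ZZ^3$: it would force $\alpha_1+\alpha_2=(1,1,1)$, which is impossible by parity. So no peeling argument can succeed in general; the correct general criterion replaces $\alpha_1+\cdots+\alpha_{n+a}$ by a single lattice point $\gamma\in (n+a)P\cap\ZZ^d$ (and then the equivalence is immediate from the part of your argument that is complete), or else one must add the hypothesis that $P$ has the integer decomposition property. Note that the paper only invokes the proposition in the contrapositive with $n=-a+1$, where $n+a=1$ and $A_1$ is spanned by the lattice points of $P$; that special case is correct and follows from what you have already proved, so the application in Theorem~\ref{thm:Ehr} is unaffected.
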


We recall the well-known combinatorial technique how to compute the $h^*$-vector of a lattice {\em simplex}. 
Given a lattice simplex $\Delta \subset \RR^d$ of dimension $d$ with the vertices $v_0, v_1, \ldots, v_d \in \ZZ^d$, we set 
$$S_\Delta=\left\{ \sum_{i=0}^d r_iv_i \in \ZZ^d : \sum_{i=0}^d r_i \in \NN, \; 0 \leq r_i < 1 \right\}. $$
We define $\height(\alpha)=\sum_{i=0}^d r_i$ for each $\alpha=\sum_{i=0}^dr_iv_i \in S_\Delta$. 

\begin{lem}[cf. {\cite[Proposition 27.7]{HibiRedBook}}]\label{compute}
Let $(h_0^*,h_1^*,\ldots,h_s^*)$ be the $h^*$-vector of $\Delta$. 
Then one has $s=\max\{ \height(\alpha) : \alpha \in S_\Delta\}$ and 
$$h_i^*=|\{ \alpha \in S_\Delta : \height(\alpha)=i\}|$$
for each $i=0,1,\ldots,s$. Moreover, $\sum_{i=0}^d h_i^* = |S_\Delta|=\text{(the volume of $\Delta$)} \cdot d!$. 
\end{lem}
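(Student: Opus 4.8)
The plan is to pass from $\Delta$ to the cone over $\Delta$ and exploit the standard half-open (Stanley) decomposition of a simplicial affine semigroup. Lift each vertex to $\hat{v}_i := (v_i,1) \in \ZZ^{d+1}$, and let $C := \RR_{\geq 0}\hat{v}_0 + \cdots + \RR_{\geq 0}\hat{v}_d$. Since $v_0,\ldots,v_d$ are affinely independent, the $\hat{v}_i$ form a $\QQ$-basis of $\RR^{d+1}$. Unwinding the definition of the Ehrhart ring, the Laurent monomial ${\bf X}^\alpha Z^n$ corresponds to the lattice point $(\alpha,n)$, and $(\alpha,n) \in C \cap \ZZ^{d+1}$ exactly when $\alpha \in n\Delta \cap \ZZ^d$. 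Hence $\kk[\Delta]$ is, as a graded $\kk$-algebra, the affine semigroup ring $\kk[C \cap \ZZ^{d+1}]$, graded by the last coordinate, in which each generator ${\bf X}^{\hat{v}_i}$ has degree $1$.

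First I would establish the half-open decomposition: every $\gamma \in C \cap \ZZ^{d+1}$ can be written uniquely as $\gamma = \alpha + \sum_{i=0}^d n_i \hat{v}_i$ with $n_i \in \NN$ and $\alpha$ a lattice point of the half-open parallelepiped $\Pi := \{\sum_{i=0}^d s_i\hat{v}_i : 0 \leq s_i < 1\}$. Existence follows by writing $\gamma = \sum r_i\hat{v}_i$ (unique since the $\hat{v}_i$ are a basis), taking $n_i = \lfloor r_i\rfloor$ and $\alpha = \gamma - \sum n_i\hat{v}_i \in \ZZ^{d+1}$; uniqueness follows by comparing coordinates in the basis $\{\hat{v}_i\}$. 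Therefore $\kk[C \cap \ZZ^{d+1}]$ is a free module over its polynomial subring $\kk[{\bf X}^{\hat{v}_0},\ldots,{\bf X}^{\hat{v}_d}]$ with basis $\{{\bf X}^\alpha : \alpha \in \Pi \cap \ZZ^{d+1}\}$. Since $\deg{\bf X}^{\hat{v}_i} = 1$ and $\deg\gamma = \deg\alpha + \sum n_i$, the Hilbert series of $\kk[\Delta]$ equals
$$\sum_{i \in \NN}(\dim_\kk\kk[\Delta]_i)\,t^i = \frac{\sum_{\alpha \in \Pi \cap \ZZ^{d+1}} t^{\deg\alpha}}{(1-t)^{d+1}}.$$
As $\dim\kk[\Delta] = d+1$, comparing with the defining form $(h_0^* + \cdots + h_s^* t^s)/(1-t)^{d+1}$ of the $h^*$-vector yields the polynomial identity $\sum_{i=0}^s h_i^* t^i = \sum_{\alpha \in \Pi \cap \ZZ^{d+1}} t^{\deg\alpha}$; equivalently $h_i^* = |\{\alpha \in \Pi \cap \ZZ^{d+1} : \deg\alpha = i\}|$ and $s = \max\{\deg\alpha : \alpha \in \Pi \cap \ZZ^{d+1}\}$.

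It remains to translate this into the language of $S_\Delta$ and $\height$. A lattice point $\alpha = \sum_{i=0}^d s_i\hat{v}_i \in \Pi \cap \ZZ^{d+1}$ has last coordinate $\sum_i s_i$, which therefore lies in $\NN$, and deleting that last coordinate sends $\alpha$ to $\sum_{i=0}^d s_i v_i \in \ZZ^d$ with $0 \leq s_i < 1$ and $\sum_i s_i \in \NN$; this identifies $\Pi \cap \ZZ^{d+1}$ with $S_\Delta$ so that the last coordinate corresponds to $\height$, which proves the first two formulas. Finally, setting $t=1$ in the polynomial identity above gives $\sum_{i=0}^s h_i^* = |\Pi \cap \ZZ^{d+1}| = |S_\Delta|$, and $|\Pi \cap \ZZ^{d+1}|$ equals the index $[\ZZ^{d+1} : \gp(\hat{v}_0,\ldots,\hat{v}_d)] = |\det(\hat{v}_0,\ldots,\hat{v}_d)| = |\det(v_1 - v_0,\ldots,v_d - v_0)| = d!\cdot\vol(\Delta)$, which is the last assertion.

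The computations are routine, so there is no serious obstacle here; the point that needs a little care is the bookkeeping in the last paragraph — checking that the map ``write $\alpha \in \Pi$ in the basis $\{\hat{v}_i\}$ and delete the last coordinate'' really does match $\Pi \cap \ZZ^{d+1}$ with $S_\Delta$ compatibly with $\deg \leftrightarrow \height$, together with the standard lattice-index computation of the cardinality of $\Pi \cap \ZZ^{d+1}$.
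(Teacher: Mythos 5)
Your proof is correct and is the standard argument for this classical lemma: the paper does not prove Lemma~\ref{compute} itself but quotes \cite[Proposition 27.7]{HibiRedBook}, and the route you take---coning over $\Delta$, the unique half-open parallelepiped decomposition making $\kk[\Delta]$ a free module over the polynomial subring on the lifted vertices, then reading off the Hilbert series and the lattice index $d!\cdot\vol(\Delta)$---is exactly the proof behind that citation. The only point to phrase carefully is your final identification: for an arbitrary lattice simplex the projection $\Pi\cap\ZZ^{d+1}\to S_\Delta$ is surjective but can fail to be injective (for the segment with vertices $-1$ and $1$ both $(0,0)$ and $(0,1)$ lie in $\Pi\cap\ZZ^{2}$ and project to $0$, so $\height$ as literally defined in the paper is then ambiguous), hence $S_\Delta$ and $\height$ should be read as the lattice points of $\Pi$ graded by the last coordinate (equivalently, points $\sum r_iv_i$ counted together with their coefficient vectors); this is the intended reading, under which your bijection is immediate, and it causes no trouble for the simplices $P_{h,n}$ used in the paper.
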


The following is the main result of this section. 
\begin{theorem}\label{thm:Ehr}
Given positive integers $h$ and $n$, there exists a lattice polytope $P_{h,n}$
such that its Ehrhart ring $\kk[P_{h,n}]$ satisfies the following: 
\begin{itemize}
\item the $h$-vector of $\kk[P_{h,n}]$ (the $h^*$-vector of $P_{h,n}$) is $(1,h,n(h+1))$; 
\item $\kk[P_{h,n}]$ is non-level. 
\end{itemize}
\end{theorem}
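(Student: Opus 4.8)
The goal is to exhibit, for each pair $(h,n)$ of positive integers, a lattice polytope $P_{h,n}$ whose Ehrhart ring has $h^*$-vector $(1,h,n(h+1))$ and is non-level. By Theorem~\ref{main1} the $h^*$-vector already forces the shape $(1,h,n(h+1))$ to be the only candidate for a non-level example, so the content is purely in the construction. The plan is to build $P_{h,n}$ as an explicit lattice simplex, for which the combinatorics of Lemma~\ref{compute} make the $h^*$-vector computable by hand, and the levelness criterion of Proposition~\ref{prop:criterion} becomes a finite check on the box $S_\Delta$. Concretely, I would look for a simplex $\Delta \subset \RR^{d}$ (with $d$ chosen conveniently, most likely $d=2$ or $d=3$) whose fundamental half-open parallelepiped $S_\Delta$ contains exactly one point of height $0$ (the origin, always present), exactly $h$ points of height $1$, and exactly $n(h+1)$ points of height $2$, and no points of larger height, so that $s=2$ and $a(\kk[P_{h,n}]) = d+1-s-\ldots$ works out; then arrange that some height-$2$ interior lattice point of $2P_{h,n}$ is \emph{not} expressible as a sum of two lattice points of $P_{h,n}$ plus an interior point at the bottom degree, which by Proposition~\ref{prop:criterion} certifies non-levelness.

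**Key steps in order.** First, choose the dimension and the vertices of $\Delta$. A natural family to try is $\Delta = \con\{0, e_1, e_2, \ldots, e_{d-1}, w\}$ for a cleverly chosen last vertex $w = (w_1,\ldots,w_d)$ whose coordinates encode $h$ and $n$; the normalized volume $|S_\Delta|$ then equals $|w_d|$ (or a similar simple expression), and one tunes $w$ so that $|S_\Delta| = 1 + h + n(h+1)$ while the height distribution splits as desired. Second, compute $S_\Delta$ and the height function explicitly: every element is $\sum r_i v_i$ with $0 \le r_i < 1$ and integer coordinates, and $\height = \sum r_i$; one shows the heights only take values $0,1,2$ and counts each layer, invoking Lemma~\ref{compute} to read off $(1,h,n(h+1))$. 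Third, identify $a(\kk[P_{h,n}])$ from the formula $a = -\min\{\ell > 0 : \ell P^\circ \cap \ZZ^d \ne \emptyset\}$; since $s = d+1+a$ and $s=2$, this pins $a = 1-d$, i.e.\ the first interior lattice point appears at dilation $d-1$. Fourth, run Proposition~\ref{prop:criterion}: produce an explicit $\alpha \in nP^\circ \cap \ZZ^d$ for $n = -a = d-1$ that lies in an "extremal corner" of the dilated polytope, and show by a direct bounding argument (comparing coordinate sums or a supporting-hyperplane inequality) that $\alpha$ admits no decomposition $\alpha = \alpha_1 + \cdots + \alpha_{\text{(right count)}} + \beta$ with each $\alpha_j \in P \cap \ZZ^d$ and $\beta$ the bottom interior point; this contradicts levelness. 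Finally, confirm $\kk[P_{h,n}]$ is a Cohen--Macaulay semi-standard graded normal domain, which is automatic for Ehrhart rings as recalled in \S4, and invoke Proposition~\ref{key} if one prefers to present the example first as a normal affine semigroup ring.

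**Where the difficulty lies.** The main obstacle is the simultaneous fitting: the \emph{same} vertex data must make the height-$1$ count equal $h$, the height-$2$ count equal exactly $n(h+1)$ (not merely $\equiv 0 \pmod{h+1}$), force no heights $\ge 3$, and still leave enough "room" in the top layer for a non-decomposable interior point at the critical dilation. Getting all four constraints from one family of simplices may require a slightly more flexible construction than a single simplex — e.g.\ taking $P_{h,n}$ to be a simplex in dimension depending on $h$ (so the $h$ height-$1$ points are the $h$ of the unit vectors $e_1,\ldots,e_h$ and a single "fractional" vertex controls both the volume $n(h+1)$ and the non-levelness), or a join/product-type polytope. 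Verifying the non-decomposability in Step~four is the delicate computational heart: one needs a clean inequality (a linear functional that is maximized on the offending lattice point among all admissible sums) rather than an exhaustive search, since the number of lattice points grows with $h$ and $n$. I expect the construction to reduce, after the right change of coordinates, to a one-parameter arithmetic condition — something like choosing $w = (1,\ldots,1, h+1)$-style data scaled by $n$ — at which point both the $h^*$-vector computation and the non-levelness inequality become short.
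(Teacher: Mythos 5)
Your proposal is a strategy outline, not a proof: for an existence theorem whose entire content is the construction, the construction itself is missing. You correctly identify the right tools (build $P_{h,n}$ as a lattice simplex, read off the $h^*$-vector from the box points $S_\Delta$ via Lemma~\ref{compute}, and certify non-levelness via Proposition~\ref{prop:criterion}), and this is indeed the paper's approach. But every step that carries the mathematical weight is deferred: you ``would look for'' a simplex, ``tune'' a vertex $w$, and ``expect'' the arithmetic to work out; you even flag in your last paragraph that the simultaneous fitting of the height-$1$ count, the height-$2$ count, the bound $s=2$, and the non-decomposable interior point is the real difficulty and that it may require leaving your proposed family. No candidate vertex set is written down, no box points are computed, and no failing decomposition is exhibited, so nothing is actually proved. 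For comparison, the paper takes the $3$-dimensional simplex with vertices $v_0=(1,1,n)$, $v_1=(0,1,0)$, $v_2=(0,0,1)$, $v_3=(1,-h,-nh)$, lists $S_\Delta$ explicitly (one point of height $0$, the $h$ points $v,w_1,\ldots,w_{h-1}$ of height $1$, and the $n(h+1)$ points $w_j', u_{q,r}$ of height $2$), and shows that $w=(1,1,1)\in 3\Delta^\circ\cap\ZZ^3$ is not of the form $\alpha+\beta$ with $\alpha\in\Delta\cap\ZZ^3$ and $\beta\in 2\Delta^\circ\cap\ZZ^3$ --- a finite check made possible precisely because $\Delta\cap\ZZ^3$ and $2\Delta^\circ\cap\ZZ^3$ have been determined.

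There is also a concrete misstep in your Step four: you propose to find the obstruction at dilation $n=-a(\kk[P_{h,n}])$, but at that dilation Proposition~\ref{prop:criterion} is vacuous, since $n+a=0$ and the required decomposition is just $\alpha=\beta$. The failure of levelness can only be detected at some dilation strictly larger than $-a$; in the paper it occurs at $n=-a+1=3$, where one must (and cannot) write $(1,1,1)$ as one lattice point of $\Delta$ plus one interior lattice point of $2\Delta$. So even the shape of the verification you plan to run would need correcting before the construction could be attempted.
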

\begin{proof}
Let $v_0=(1,1,n), v_1=(0,1,0),v_2=(0,0,1)$ and $v_3=(1,-h,-nh)$ and let $P_{h,n}$ be the convex hull of them. 
Then $P_{h,n}$ is a lattice simplex of dimension $3$ with its vertices $v_0,v_1,v_2,v_3$. 
We will prove that $\kk[P_{h,n}]$ satisfies the required properties. 

Let $\Delta=P_{h,n}$. First of all, we see that $\vol(\Delta) \cdot 3! =(n+1)(h+1)$ 
by calculating the determinant of the matrix $(v_1-v_0, v_2-v_0, v_3-v_0)$. 

Next, let us compute $S_\Delta$. Let $$v:=\frac{h}{h+1}v_0+\frac{1}{h+1}v_3=(1,0,0) \in \Delta \cap \ZZ^3.$$ 
Then $v \in S_\Delta$ with $\height(v)=1$. 

\begin{itemize}
\item For each $i=1,2,\ldots,h-1$, let 
$$w_i:=\frac{i}{h}v_3+\frac{h-i}{h}v=(1,-i,-ih).$$ Then $w_i \in S_\Delta$ with $\height(w_i)=1$. 
\item For each $j=1,2,\ldots,n$, let 
$$w_j':=\frac{j}{n+1}v_0+\frac{n+1-j}{n+1}v_1+\frac{j}{n+1}v_2+\frac{n+1-j}{n+1}v=(1,1,j).$$
Then $w_j' \in S_\Delta$ with $\height(w_j')=2$. 
\item For each $q=0,1,\ldots,h-1$ and $r=1,2,\ldots,n$, let 
\begin{align*}
u_{q,r}:&=\frac{rh}{(n+1)h}v_1+\frac{n+1-r}{n+1}v_2+\frac{(n+1)q+r}{(n+1)h}v_3+\frac{(n+1)(h-q)-r}{(n+1)h}v \\
&=(1,-q,1-nq-r). 
\end{align*}
Then $u_{q,r} \in S_\Delta$ with $\height(u_{q,r})=2$. 
\end{itemize}
Hence, we obtain that 
\begin{align*}S_\Delta&=\{(0,0,0)\}\cup\{v\} \cup \{w_i : 1 \leq i \leq h-1\} \\
&\cup \{w_j' : 1 \leq j \leq n\} \cup \{u_{q,r} : 0 \leq q \leq h-1, \; 1 \leq r \leq n\}.\end{align*}
Thus, the $h^*$-vector of $\Delta$ coincides with $(1,h,n(h+1))$ by Lemma \ref{compute}. 

In addition, we also see that 
\begin{align*}
\Delta \cap \ZZ^3 &= \{v_0,v_1,v_2,v_3\} \cup \{v\} \cup \{w_i : 1 \leq i \leq h-1\}, \text{ and }\\
2\Delta^\circ \cap \ZZ^3 &= \{w_j':1 \leq j \leq n\} \cup \{u_{q,r}: 0 \leq q \leq h-1, 1 \leq r \leq n\}.
\end{align*}
%Then $u_{0,1} \in 2\Delta \cap \ZZ^3$. However, there are no $\alpha_1, \alpha_2 \in \Delta \cap \ZZ^3$ 
%such that $u_{0,1}=\alpha_1+\alpha_2$. Thus, $\kk[\Delta]$ is not standard graded by Proposition \ref{prop:criterion}. 
Consider $$w:=v_1+v_2+v=(1,1,1) \in 3\Delta^\circ \cap \ZZ^3.$$ 
Then we can see that there are no $\alpha \in \Delta \cap \ZZ^3$ and $\beta \in 2\Delta^\circ \cap \ZZ^3$ such that $w=\alpha+\beta$. 
This implies that $\kk[\Delta]$ is non-level by Proposition \ref{prop:criterion}. 
\end{proof}

\begin{rem}\label{degree 2 Ehrhart}
It is known by \cite{HT} and \cite{Treut} that for integers $a \ge 0$ and $b >0$, 
$(1,a,b)$ is the $h^*$-vector of some lattice polytope, 
i.e., the $h$-vector of some semi-standard graded normal affine semigroup ring 
if and only if $a \leq 3b+3$ or $(a,b)=(7,1)$ holds. 
The ``If'' part was proved in \cite{HT} and the ``Only if'' part was proved in \cite{Treut}. 
In \cite{HT}, for  $a \ge 0$ and $b >0$  satisfying $a \leq 3b+3$ or $(a,b)=(7,1)$, 
the lattice polytope whose $h^*$-vector coincides with $(1,a,b)$ is given. 
We can see that the associated Ehrhart rings of their examples are all {\em level}. 
Namely, there exists a level Ehrhart ring with the $h$-vector $(1,h,n(h+1))$ for any positive integers $h$ and $n$. 
\end{rem}

\begin{rem}
Let $C$ be a smooth projective curve of genus $g \, (\ge 1)$,  
and   $\cL$ an invertible sheaf on $C$ with $\deg \cL = 2g+c$ for some $c \ge 1$. Then, by \cite[Corollary to Theorem 6]{Mum}, the ring  
$$A = \bigoplus_{ n \in \NN} H^0(C, n \cL)$$
is a normal Cohen--Macaulay standard graded domain with the $h$-vector 
$(1, g+c-1,g)$. 
(If $c$ is non-positive but not so small, then $A$ has the same property in many cases. This is a classical topic of the curve theory, but we do not argue this direction here.)

Anyway, combining this observation with Remark~\ref{degree 2 Ehrhart}, we see that, for any sequence $(1, a,b)$ for integers $a \ge 0$ and $b >0$, there  is a  normal Cohen--Macaulay semi-standard graded domain with the $h$-vector $(1, a, b)$. Moreover, we can take such rings from level rings. 
\end{rem}

\section*{Acknowledgements}
We are grateful to Professors Chikashi Miyazaki and Kazuma Shimomoto for stimulating discussion. We also thank Professors Takesi Kawasaki and Kazunori Matsuda for  valuable  comments which helped to improve the exposition of the paper.

\end{document}